\newtheorem{conjecture}{Conjecture}
\newtheorem*{lema}{Lemma}
\newtheorem*{theorem}{Theorem}
\newtheorem*{corollary}{Corollary}
\newtheorem{definition}{Definition}
\theoremstyle{remark}
\newtheorem{remark}{Remark}
\newcommand{\U}{\mathrm{U}}
\newcommand{\SU}{\mathrm{SU}}
\renewcommand{\Im}{\operatorname{\mathfrak{Im}}}
\begin{document}

\title{Probabilistic models for Gram's Law}

\author{C\u{a}t\u{a}lin Hanga}
\address{Department of Mathematics, University of York, York, YO10 5DD, United Kingdom}
\email{catalin.hanga@protonmail.ch}

\author{Christopher Hughes}
\address{Department of Mathematics, University of York, York, YO10 5DD, United Kingdom}
\email{christopher.hughes@york.ac.uk}

\maketitle

\begin{abstract}
Gram's Law describes a pattern that frequently occurs in the distribution of the non-trivial zeros of the Riemann zeta function along the critical line. Whenever Gram's Law holds true, it reduces the difficulty of computing the corresponding zeta zeros. In this paper, we provide a model that estimates how often this pattern occurs. The model is based on a conjecture that relates the statistical distribution of the zeta zeros to that of the eigenvalues of random unitary matrices.
\end{abstract}

\section{Gram's Law for the Riemann zeta function}

It is well-known that the zeros of the zeta function $\zeta(s)$ are of fundamental importance in number theory \cite{edwards}. The numerical computation of their precise values has a long history, dating back to  Riemann himself who calculated the first few zeros (unpublished), and continues up to the present day, with all the first $10^{13}$ zeros being currently known, as well as a couple of billion zeros after the $10^n$-th zero, for each $13\leq n\leq24$ \cite{gourdon}. When computing the zeros, there are two other functions related to $\zeta(s)$ that are widely used.

The first one is named the Riemann-Siegel theta function, defined as
\begin{align}
\theta(t) :=& \arg \left[ \pi^{-\frac{i t}{2}} \Gamma\left(\frac{1}{4}+\frac{it}{2}\right) \right] \notag \\
=&  \Im \log\Gamma\left(\frac{1}{4}+\frac{i t }{2}\right)-\frac{\log\pi}{2} \, t     \label{eq:hardytheta}
\end{align}
where $\Gamma(s)$ is the gamma function, and the branch of the logarithm is determined by continuous variation up the vertical line, starting from $\theta(0)=0$. The theta function takes real arguments $t \in \mathbb R$, and  is a real-valued function (as can be seen from the definition). It can also be shown that near $t_{min}\approx 7$ it has a minimum value of $\theta(t_{min}) \approx -3.5 $, while for $t> t_{min}$, $\theta(t)$ is strictly increasing. And by applying Stirling's formula for the gamma function, we can derive an asymptotic expansion for theta which, although does not converge, its first few terms give a good approximation when $t \gg 1$
\begin{equation} \label{eq:asympt}
\theta(t) = \frac{t}{2}\log\frac{t}{2\pi} -\frac{t}{2}-\frac{\pi}{8} + \sum_{k=1}^n \frac{B_{2k} (1-2^{1-2k})}{4k(2k-1)} \cdot \frac{1}{t^{2k-1}} + \mathcal O\left( \frac{1}{t^{2n+1}} \right) 
\end{equation}
(here $B_{2k}$ are the Bernoulli numbers).

The other function is called the Hardy Z function, and is given by
\begin{equation}
Z(t) := e^{i\theta(t)} \zeta\left( \frac{1}{2}+i t \right)                  \label{eq:hardyz}
\end{equation}
Similar to theta, the Z function also takes real arguments $t\in \mathbb R$, and it is a real-valued function (as a consequence of the functional equation for the zeta function). We remark that  the zeros of $\zeta(\frac{1}{2}+it)$ coincide with the zeros of $Z(t)$. But because the Z function is real rather than complex, these zeros can be found simply by studying its sign changes. And in turn, the changes in the sign of $Z(t)$ can be determined from evaluating $\zeta(\frac{1}{2}+it)$ at certain points, named Gram points.

\begin{definition}
For any integer $M \geq -1$, we define the $M$-th Gram point $g_M$ as the unique solution in the range $t > 7$ of the equation
\begin{equation} \label{eq:gram} 
\theta(g_M) = M\pi 
\end{equation}
and we call a Gram interval any interval between two consecutive Gram points $[ g_M, \, g_{M+1} )$.
\end{definition}

The definition of the Z function \eqref{eq:hardyz}, together with Euler's identity, imply that at every Gram point $g_M$ we have
\begin{equation} 
\zeta\left(\frac{1}{2}+ig_M\right) = (-1)^M Z(g_M)  \label{eq:zeta}
\end{equation}
Keeping in mind that  $Z(t)$ is a real-valued function, we obtain an alternative definition of Gram points, namely as points on the critical line at which the Riemann zeta function $\zeta\left( \frac{1}{2}+i g_M \right) $   takes real (non-zero) values. 

In particular, if $ \zeta\left( \frac{1}{2}+it \right)  $ has the same sign at two successive Gram points $t=g_M $ and $t=g_{M+1}$, then according to \eqref{eq:zeta} $Z(t)$ must have opposite signs at these points. This means that $Z(t)$ has at least a root between $g_M$ and $g_{M+1}$, which is equivalent to $\zeta(\frac{1}{2}+it)$ having at least one zero inside the Gram interval $[g_M, g_{M+1})$. 

This technique was initially used by Danish mathematician J\o rgen P. Gram \cite{gram} in 1903 to find the first 15 zeros of $\zeta(\frac{1}{2}+it)$  in the range $0< t < 66$. He noticed that $\zeta\left(\frac{1}{2}+ig_M\right) > 0$ for all $-1\leq M \leq 14$ and that each of these Gram intervals contained exactly one zero of the zeta function or, in other words, that the Gram points alternated with the zeta zeros. Gram believed that this pattern would continue beyond the first 15 intervals, but also that it would not necessarily hold true all the time. When it does hold, this phenomenon is named Gram's Law.

\begin{definition}
Given two consecutive Gram points $g_M$ and $g_{M+1}$, we say that Gram's Law holds true for $[ g_M, \, g_{M+1} )$ if this Gram interval contains exactly one zero of $\zeta(\frac{1}{2}+it)$.
\end{definition}

The original definition was proposed by  J. I. Hutchison \cite{hutchinson} in 1925, and was given in terms of the zeros of the Z function  \\

\begin{quote}
``Gram calculated the first fifteen roots [of $Z(t)$] and called attention to the fact that the [roots] and the [Gram points] separate each other. I will refer to this property of the roots as \emph{Gram's Law}. Gram expressed the belief that this law is not a general one." \\
\end{quote} 

Hutchison also extended Gram's computations to the first 138 zeros of $\zeta(\frac{1}{2}+it)$, and discovered the first instances where Gram's Law fails: the interval $[g_{125} , g_{126})$ doesn't contain any zeros, while the next one $[g_{126} , g_{127})$ has two. Subsequently, in 1935 it was proved by E. C. Titchmarsh \cite{titchmarsh2} that it fails infinitely many times. 

There is another, less restrictive version of Gram's Law, called the Weak Gram's Law, which states that for $M\in\mathbb N$
\[ (-1)^M Z(g_M)>0 \quad \text{ and } \quad (-1)^{M+1}Z(g_{M+1})>0 \]
This is equivalent to claiming that the Gram interval $[g_M, g_{M+1})$ contains an odd number of simple zeros (or a zero with odd multiplicity). Although this version also has exceptions (as can be seen from Hutchison's results) Titchmarsh \cite{titchmarsh1} managed to prove that the Weak Gram Law is true infinitely many times. And more recently, in 2009, T. S. Trudgian  \cite{trudgian1}, \cite{trudgian2} has shown that for sufficiently large $T$, there exists a positive proportion of Gram intervals between $T$ and $2T$ that contain at least one zero of $\zeta(\frac{1}{2}+it)$; in particular, this implies that the Weak Gram Law is true for a positive proportion of the time. 

In the case of the original Gram's Law, it has not yet been proven whether it is also true infinitely many times, much less for a positive proportion of the time. Despite this uncertainty, extensive numerical computations at very high regions up the critical line suggest that Gram's Law does hold  for a large proportion of these intervals: in a series of four papers published during the 1980's, R. P. Brent, J. van de Lune and others \cite{brent1}, \cite{brent2}, \cite{lune1}, \cite{lune2} have analyzed the first 1.5 billion Gram intervals (up to height $t=545,439,823.215$) and reported that approximately $72.61\%$ of them obey Gram's Law. A summary of their results can be seen in Table \ref{tab:brentlune} below.

\begin{definition}
For any $k\in \mathbb N \cup \{0\}$ and $0\leq L<M$, we define $G_{L, M}(k) \in [0,1]$ to be the proportion of Gram intervals between $g_L$ and $g_M$ that contain exactly $k$ zeros (in particular, $G_{L, M}(1)$ represents the proportion of intervals that obey Gram's Law, while $(M-L)\cdot G_{L,M}(k)$ will be the number of Gram intervals with exactly $k$ zeros).
\end{definition}

\begin{remark}
In everything that follows from here on, the first zeta zero $\gamma_1 = 14.1347\ldots$ and the corresponding Gram interval $[g_{-1}, g_0)$ are going to be excluded.
\end{remark}

\begin{table}[h]
  \centering
\begin{tabular}{| r | r | r | r | r | r |}
\hline
 $M$ &  $M \cdot G_{0,M}(0)$  &  $M \cdot G_{0,M}(1)$  &  $M \cdot G_{0,M}(2)$  &  $M \cdot G_{0,M}(3)$  &  $M \cdot G_{0,M}(4)$  \\
\hline   
100                    &                       &                               100  &                                        &                                        &                                       \\
1,000                 &                  42 &                               916  &                                   42 &                                        &                                       \\
10,000               &                808 &                             8,390 &                                 796 &                                    6  &                                       \\
100,000             &           10,330 &                           79,427 &                           10,157 &                                  86  &                                       \\
1,000,000          &         116,055 &                        769,179  &                        113,477  &                            1,289   &                                       \\
10,000,000        &      1,253,556 &                      7,507,820 &                     1,223,692  &                           14,932  &                                       \\
100,000,000      &    13,197,331 &                    73,771,910 &                    12,864,188 &                          166,570 & 1                                    \\
1,000,000,003   &  137,078,283 &                  727,627,708  &                133,509,764  &                      1,784,225  &  23                                 \\
\hline
\end{tabular}
\medskip
\caption{Summary of results by R. Brent, J van de Lune et al.} \label{tab:brentlune}
\end{table}

Van de Lune et al. have concluded the last paper in their series with the following remarks: \\

\begin{quote}
``Our statistical material suggests that the zeros of $Z(t)$ are distributed among the Gram intervals according to some hitherto unknown probabilistic law. (\dots) It would be interesting to have a probabilistic model which could explain or at least support this phenomenon.'' \\
\end{quote} 

The main purpose of the current paper is to use a conjecture from Random Matrix Theory (RMT) to develop such a model, that describes the asymptotic limit of $G_{0,M}(k)$ for large $M$, as well as its rate of convergence. 

\subsection{RMT and Fujii's conjecture}

Following work of F. Dyson \cite{dyson} and H. Montgomery \cite{montgomery}, a conjecture was established --- backed up by theoretical, heuristic, and numerical evidence --- that generalizes their results and is equivalent to the following statement

\begin{conjecture}[Montgomery, Dyson et. al.] 
The zeros of the Riemann zeta function at height $T$ on the critical line are statistically distributed like the eigenvalues of a $N\times N$ random  unitary matrix around the unit circle, where the height $T$ and the matrix size $N$ are related by
\begin{equation} \label{eq:nt} 
N\approx \log\frac{T}{2\pi} 
\end{equation}
\end{conjecture}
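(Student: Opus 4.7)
The plan is to establish the conjecture by matching local statistics of the two point processes---Riemann zeros around height $T$ on one side, and eigenphases of Haar-distributed $N\times N$ unitary matrices with $N\approx\log(T/2\pi)$ on the other---after rescaling so that the mean spacing is one. The natural target is equality of all $n$-level correlation functions in the limit. For the random matrix side one has an explicit answer: the eigenphase correlations of a Haar unitary are determinants of a sine-type kernel $K_N$, and as $N\to\infty$ they converge to the determinantal process with kernel $K(x,y)=\sin(\pi(x-y))/(\pi(x-y))$. The task is therefore to show that the rescaled zeros $\tilde\gamma_n:=\gamma_n\,\theta'(\gamma_n)/\pi$ have matching limiting $n$-point statistics.

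First I would follow Montgomery's original strategy for the $2$-point case. Using the Riemann--von Mangoldt explicit formula, the sum $\sum f\bigl((\gamma-\gamma')\log(T/2\pi)/(2\pi)\bigr)$ over pairs of zeros up to height $T$ is rewritten as a sum over prime powers weighted by $\Lambda(n)/\sqrt{n}$. A term-by-term evaluation, assuming RH, produces exactly the CUE pair correlation $1-\bigl(\sin(\pi u)/(\pi u)\bigr)^2$, at least when $\widehat f$ is supported in $(-1,1)$. Next I would extend to higher $n$-level correlations by the Rudnick--Sarnak method: expand the $n$-fold sum over zeros via the explicit formula, identify the ``diagonal'' combinatorial contributions---which reconstruct the determinantal CUE $n$-point density---and estimate the off-diagonal terms using the Hardy--Littlewood prime $k$-tuples conjecture in a quantitative, uniform form. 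Finally one converts equality of correlation functions into equality of limiting gap distributions and counting statistics by a standard Bochner-type argument, which is what is ultimately needed to reason about Gram intervals.

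The hard part, and the reason this statement is expected to remain a conjecture, will be removing the restriction on $\mathrm{supp}\,\widehat f$. Unconditionally one can only access test functions with very narrow Fourier support, and the full conjecture requires not only RH but strong, uniform forms of the prime $k$-tuples conjecture that lie well beyond current technology. An alternative route goes through the $L$-function ratios conjectures of Conrey, Farmer, Keating, Rubinstein and Zirnbauer, which give heuristic access to arbitrary $n$-point correlations in one stroke, and through the function-field analogue of Katz--Sarnak where the unitary symmetry can be established rigorously; either route, however, either replaces one conjecture by another or proves a different model problem, and so a genuine unconditional proof of the statement as formulated does not appear within reach of present methods.
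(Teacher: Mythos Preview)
The paper does not prove this statement: it is labelled \emph{Conjecture~1} and is invoked only as a working hypothesis to motivate the random-matrix model for Gram's Law. There is no proof in the paper to compare your proposal against. Your write-up is a reasonable sketch of the Montgomery and Rudnick--Sarnak programmes and of the obstructions (Fourier-support restrictions, reliance on RH and on uniform Hardy--Littlewood), and you yourself conclude that an unconditional proof is out of reach; that conclusion is exactly why the authors state it as a conjecture rather than a theorem.

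If the task was to supply the paper's own argument for this item, the correct response is that none is given or expected. If instead you intended to justify \emph{using} the conjecture, the paper's justification is purely heuristic and numerical (matching $E_{\U(N)}$ and $E_{\SU(N)}$ statistics to the Gram-interval data), not the correlation-function route you outline.
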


In order for this kind of comparison to make sense, the zeta zeros must be normalized to have unit average spacing, while the eigenangles are rescaled to have unit mean density. 


As usual, we  denote by U($N$) the group of all $N\times N$ unitary matrices, and for any matrix $A\in $ U($N$) we will denote its eigenvalues by $e^{i\theta_1}, \dots, e^{i\theta_N}$, where $\theta_1, \ldots, \theta_N \in [-\pi, \pi)$. A random element of this group means being chosen according to Haar measure, which is the only probability measure on U($N$) that is invariant  under unitary transformations. If we define the U($N$) Dyson product to be

\[ \mathcal D_{\U(N)}(\theta_1, \ldots, \theta_N) := \prod_{1\leq j<k \leq N} |e^{i\theta_j} - e^{i\theta_k}|^2 \]
then Weyl \cite{weyl} showed that Haar measure on the U($N$) group leads to the following probability density function for the eigenangles

\[ \mathcal P_{\U(N)}( \theta_1, \dots, \theta_N ) := \frac{1}{N!(2\pi)^N} \mathcal D_{\U(N)}(\theta_1, \ldots, \theta_N) \]
(in other words, $\mathcal P_{\U(N)}( \theta_1, \dots, \theta_N ) \; d\theta_1 \ldots d\theta_N$ represents the probability of a random U($N$) matrix having  eigenangles in $[ \theta_1, \> \theta_1+d\theta_1]$, $[ \theta_2, \> \theta_2+d\theta_2]$, and so on).

\begin{definition} 
Let $J\subset [ -\pi,\pi ) $ be an arbitrary fixed interval on the unit circle, of length $ \frac{2\pi}{N}$. The probability that $J$ contains exactly $k$ unscaled eigenvalues of a random $\U(N)$ matrix is given by

\begin{equation} \label{eq:eun}
E_{\U(N)}(k,J) := \binom{N}{k} \int_{J^k} \int_{([-\pi,\pi)\smallsetminus J)^{N-k}} \mathcal P_{\U(N)}(\theta_1, \ldots, \theta_N) \; d\theta_1\ldots d\theta_N
\end{equation}
\end{definition}

 In Section 3 we will present a more efficient formula for computing this probability. Note that by rotation invariance of Haar measure, $E_{\U(N)}(k,J)$ is insensitive to the actual starting  position of $J$, only to its length.  Also, it can be easily shown, by a change of variables, that the probability of finding $k$ unscaled eigenvalues in an interval of length $\frac{2\pi}{N}$ is equal to the probability of finding $k$ rescaled eigenvalues in an interval of length 1.

Because the egienvalues of a random unitary matrix provide a good statistical model for the zeros of the zeta function, it is natural to ask if there could also exist a RMT model for Gram's Law. The first such model was proposed by A. Fujii  \cite{fujii} in 1987, who made a conjecture that is equivalent to the following statement:

\begin{conjecture}[Fujii]
For any $k\in \mathbb N\cup\{0\}$ 

\[ \lim_{N\to\infty} E_{U(N)}(k, J)  = \lim_{M\to\infty}  G_{0,M}(k) \]
\end{conjecture}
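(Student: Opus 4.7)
The plan is to use the Montgomery--Dyson conjecture to translate the statistics of zeta zeros inside Gram intervals into the statistics of eigenangles of random $\U(N)$ matrices inside a matching subinterval of the unit circle.

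First, I would estimate the length of a typical Gram interval at height $t$. Applying the mean value theorem to the defining equation $\theta(g_M) = M\pi$ gives $g_{M+1} - g_M = \pi / \theta'(\xi)$ for some $\xi \in (g_M, g_{M+1})$, and from the asymptotic expansion \eqref{eq:asympt} we have $\theta'(t) \sim \frac{1}{2}\log(t/2\pi)$. Hence a Gram interval at height $t$ has length approximately $2\pi/\log(t/2\pi)$, which by well-known zero-counting estimates is precisely the mean spacing between zeta zeros there. After rescaling the zeros to unit mean spacing, every Gram interval therefore becomes asymptotically an interval of unit length.

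Second, I would apply the Montgomery--Dyson conjecture at height $T \approx g_M$, with $N \approx \log(g_M/2\pi)$ as in \eqref{eq:nt}. The conjecture identifies the limiting local statistics of the rescaled zeta zeros at this height with those of the rescaled eigenangles of a random $\U(N)$ matrix. In particular, the local event ``exactly $k$ points in a subinterval of unit length in rescaled coordinates'' has the same limiting probability on both sides. By the change of variables noted immediately after \eqref{eq:eun}, on the random-matrix side this limiting probability is exactly $\lim_{N\to\infty} E_{\U(N)}(k, J)$ with $|J| = 2\pi/N$.

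Third, I would note that $G_{0,M}(k)$ is by definition the average, over Gram intervals up to $g_M$, of the indicator ``this interval contains exactly $k$ zeros''. Because the local limiting probability supplied by Montgomery--Dyson is independent of the height $t$ (all Gram intervals have, after rescaling, asymptotically unit length), a standard averaging argument allows one to interchange the Gram-interval average with the limit, yielding $\lim_{M\to\infty} G_{0,M}(k) = \lim_{N\to\infty} E_{\U(N)}(k,J)$.

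The hard part is that Gram intervals are not random: they form a deterministic, slowly-varying sequence of test intervals on the critical line, whereas Montgomery--Dyson is most naturally formulated in terms of averages over random test intervals at a fixed height. To make the identification rigorous one must argue that the precise ``positioning'' of Gram points relative to the zeta zeros is statistically irrelevant in the limit, an analogue of the rotation invariance of the $\U(N)$ eigenvalue process noted after \eqref{eq:eun}. Establishing this, even granted Montgomery--Dyson, is the fundamental obstacle that keeps the identification at the level of a conjecture rather than a theorem; the rest of the paper can then proceed by computing the RMT side $E_{\U(N)}(k,J)$ and comparing it with the numerical data of Table~\ref{tab:brentlune}.
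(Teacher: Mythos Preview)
The statement you are addressing is a \emph{conjecture}, and the paper offers no proof of it. There is therefore nothing in the paper to compare your argument against: Fujii's conjecture is simply stated, attributed, and then tested numerically. Your write-up is not a proof but a heuristic motivation, and you yourself acknowledge this in the final paragraph; as a motivation it is reasonable and close in spirit to the paper's own one-line justification (Gram intervals have mean-spacing length, so under Montgomery--Dyson the count of zeros in them should be modelled by $E_{\U(N)}(k,J)$ with $|J|=2\pi/N$).

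What is worth flagging is that the ``hard part'' you isolate --- Gram points are deterministic and special, not randomly placed --- is precisely the point the paper seizes on, but in the opposite direction from what you suggest. Rather than arguing that the positioning of Gram points is asymptotically irrelevant (which would support Conjecture~2), the paper argues that their special character \emph{matters} for finite $N$: Gram points are the points where $\zeta(\tfrac12+it)$ is real, and the correct RMT analogue is therefore the set of points where the characteristic polynomial is real, leading to the $\SU(N)$ model and Conjecture~3. The paper then shows that $E_{\SU(N)}(k,\mathcal J)$ and $E_{\U(N)}(k,J)$ share the same $N\to\infty$ limit but with different rates of convergence, and it is the $\SU(N)$ rate that matches the zeta data. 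So your heuristic, while plausible for the bare limiting value, misses the paper's main thrust: the obstacle you name is not dismissed but exploited to replace Fujii's model with a sharper one.
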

\quad \\
If $k=0,1,2$ the values of the limits on the LHS are known to be approximately \cite{odlyzko2}
\begin{align*}
 & \lim_{N\to\infty }E_{U(N)}(0, J) \approx 0.17022 \\
 & \lim_{N\to\infty }E_{U(N)}(1, J) \approx 0.66143 \\
 & \lim_{N\to\infty }E_{U(N)}(2, J) \approx 0.16649 
\end{align*}
One way of verifying whether the two limits in Fujii's conjecture do indeed coincide is to compare their rates of convergence and see how similar they are. Using formula \eqref{eq:eun} we can compute the values of $E_{U(N)}(k,J)$ for small $N$, which are given in Table \ref{tab:E_U(N)(k,J)}. We remark that on each column, $E_{U(N)}(k,J)$ converges very fast to its corresponding limit, and that on every row, they add up to almost 100\%.


\begin{table}[h]
  \centering
\begin{tabular}{| r | c | c | c |}
\hline
 $N$ &  $E_{U(N)}(0, J)$  &  $E_{U(N)}(1, J)$  &  $E_{U(N)}(2, J)$  \\
\hline
 2   &  0.148679  &  0.702642  & 0.148679 \\
 3   &  0.161362  &  0.678268  & 0.159378 \\
 4   &  0.165362  &  0.670641  & 0.162630 \\
 5   &  0.167146  &  0.667251  & 0.164060 \\
 6   &  0.168098  &  0.665445  & 0.164817 \\
 7   &  0.168666  &  0.664367  & 0.165268 \\
 8   &  0.169032  &  0.663673  & 0.165558 \\
 9   &  0.169283  &  0.663199  & 0.165755 \\
10  &  0.169461  &  0.662860  & 0.165896 \\
11  &  0.169593  &  0.662611  & 0.166000 \\
12  &  0.169693  &  0.662421  & 0.166079 \\
13  &  0.169771  &  0.662274  & 0.166140 \\
14  &  0.169833  &  0.662157  & 0.166188 \\
15  &  0.169882  &  0.662063  & 0.166227 \\
16  &  0.169923  &  0.661986  & 0.166259 \\
17  &  0.169957  & 0.661922   & 0.166286 \\
18  &  0.169985  & 0.661869   & 0.166308 \\
19  &  0.170009  & 0.661824   & 0.166327 \\
20  &  0.170029  & 0.661785   & 0.166343 \\
21  &  0.170047  & 0.661752   & 0.166357 \\
\hline
\end{tabular}
\medskip
 \caption{$E_{U(N)}(k, J)$ for $k=0,1,2$ and $N=2,\ldots, 21$} \label{tab:E_U(N)(k,J)}
\end{table}

However, the values of $G_{0,M}(k)$ that can be deduced from Table \ref{tab:brentlune} are not directly comparable with the entries from Table \ref{tab:E_U(N)(k,J)}, and need to be recomputed for different indexes $M$ in the following way: we know that the matrix size $N$ is related to the height up the critical line $T$  according to formula \eqref{eq:nt}, which is equivalent to 
\[ T \approx 2\pi e^N \]
And in our case, the height is given by the Gram points that we are interested in ($T=g_M$), so we can define $M_N$ to be the index of the Gram point $g_{M_N}$ that lies at the height on the critical line that corresponds to unitary matrices of size $N \times N$. 

From the definition of the Gram points \eqref{eq:gram} we have that

\[ M_N = \frac{1}{\pi} \theta(g_{M_N}) \]
and from the asymptotic formula for the theta function \eqref{eq:asympt} we get

\[ \frac{1}{\pi} \theta(T) \approx \frac{T}{2\pi}\log\frac{T}{2\pi} -\frac{T}{2\pi} \]

Combining the previous three equations, we can derive an approximate formula for the index $M_N$ in terms of the matrix size

\[ M_N  \approx e^N (N-1) \]
The Gram points with these indexes split the critical line into increasingly large segments of type $[g_{M_N}, g_{M_{N+1}})$, and for each segment we can recompute the proportion of Gram intervals that contain exactly $k$ zeros, $G_{M_N , M_{N+1}}(k)$; these are presented in Table \ref{tab:GramStatsLowHeight} and represent the values that can be compared with the corresponding  results from Table \ref{tab:E_U(N)(k,J)}.

\begin{table}[h]
  \centering
\begin{tabular}{ | r | r | c | c | c |}
\hline
$N$ & $M_N$  &  $G_{M_N , M_{N+1}}(0)$  &  $G_{M_N , M_{N+1}}(1)$  &  $G_{M_N , M_{N+1}}(2)$  \\
\hline
2   &                          7 &                 & 1.000000 &                   \\
3   &                        40 & 0.016129 & 0.967741 &  0.016129 \\
4   &                      164 & 0.039351 & 0.921296 & 0.039351  \\
5   &                      594 & 0.069669 & 0.860661 & 0.069669  \\
6   &                   2,017 & 0.083059 & 0.834538 & 0.081744  \\
7   &                   6,580 & 0.095051 & 0.810527 & 0.093791  \\
8   &                 20,867 & 0.105168 & 0.790572 & 0.103348  \\
9   &                 64,825 & 0.111233 & 0.778687 & 0.108924  \\
10 &               198,238 & 0.116361 & 0.768576 & 0.113764  \\
11 &               598,741 & 0.121410 & 0.758585 & 0.118597  \\
12 &            1,790,303 & 0.125309 & 0.750841 & 0.122389  \\
13 &            5,308,961 & 0.128694 & 0.744212 & 0.125490  \\
14 &          15,633,856 & 0.131542 & 0.738581 & 0.128210  \\
15 &          45,766,243 & 0.134146 & 0.733422 & 0.130716 \\
16 &        133,291,658 & 0.136422 & 0.728930 & 0.132871 \\
17 &        386,479,244 & 0.138428 & 0.724956 & 0.134802 \\
18 &     1,116,219,475 & 0.140223 & 0.721401 & 0.136526 \\
19 &     3,212,681,417 & 0.141825 & 0.718223 & 0.138077 \\
20 &     9,218,138,713 & 0.143277 & 0.715342 & 0.139481 \\
21 &   26,376,314,690 & 0.144590 & 0.712736 & 0.140756 \\
22 &   75,283,169,769 &     \dots    &    \dots     & \dots        \\
\hline
\end{tabular}
\medskip
\caption{ $G_{M_N , M_{N+1}}(k)$ for $k=0,1,2$ and $N=2,\ldots, 21$ }  \label{tab:GramStatsLowHeight}
\end{table}

Now, according to Conjecture 1, for each finite $N$, $E_{\U(N)}(k, J)$ should provide a good approximation to $G_{M_N , M_{N+1}}(k)$, but we can see that it is not the case. This does not necessarily imply that their asymptotic limits don't coincide, but what is clear from this data is that $E_{\U(N)}(k, J)$ and  $G_{M_N , M_{N+1}}(k)$ have very different rates of convergence.

We claim that this apparent contradiction with Conjecture 1 originates from the use of an incorrect RMT analogy for Gram points and intervals. Recall that $J$ was chosen to be an arbitrary fixed interval on the unit circle and as a consequence, there is nothing inherently special about its endpoints. However, these endpoints should represent the RMT analogues of Gram points and, as we have mentioned, the Gram points are special, in the sense that they are points on the critical line at which the zeta function takes real (non-zero) values.

In the following section, we will consider and analyze an alternative RMT model for Gram's Law, in which the analogues Gram points are not fixed on the unit circle, but instead depend on the corresponding unitary matrix and are related to its characteristic polynomial in the same manner in which the actual Gram points relate to the Riemann zeta function. Towards this purpose, we first  introduce the corresponding notions of probability density function and Dyson product for random SU($N$) matrices, which will be required later.


\subsection{Random special unitary matrices}

A SU($N$) matrix is a unitary matrix with determinant equal to 1. It also has a Haar measure which effectively comes from the Haar measure for unitary matrices, but with one eigenangles forced to equal the value that makes the sum of all $N$ eigenangles congruent to $0 \pmod{2\pi}$ since that would make the determinant equal to 1 \cite{hiai}.

That is, the probability density function for the $N$ eigenangles of a Haar distributed $\SU(N)$ matrix is 
\[ \mathcal P_{SU(N)}(\theta_1, \ldots, \theta_N) := \frac{1}{N!(2\pi)^{N-1}} \mathcal D_{U(N)}(\theta_1, \ldots, \theta_N) \cdot \delta(\theta_1+\ldots+\theta_N \text{ mod } 2\pi) \]
where $\delta(x)$ represents the Dirac delta function. If we integrate it over one of the variables, we have
\[ \int_{[-\pi, \pi)} \mathcal P_{SU(N)}(\theta_1, \ldots, \theta_N) \, d\theta_N =  \frac{1}{N!(2\pi)^{N-1}} \mathcal D_{SU(N)}(\theta_1, \ldots, \theta_{N-1}) \]
where $\mathcal D_{SU(N)}(\theta_1, \ldots, \theta_{N-1})$ denotes the SU($N$) Dyson product, and  is given by
\begin{align*}
    \mathcal D_{SU(N)}(\theta_1, \ldots, \theta_{N-1})  &:= \int_{[-\pi, \pi)}\mathcal  D_{U(N)}(\theta_1, \ldots, \theta_N) \cdot \delta(\theta_1+\ldots+\theta_N \text{ mod } 2\pi ) \, d\theta_N \\
    &= \mathcal D_{U(N)}(\theta_1, \ldots, \theta_{N-1} , -\theta_1 - \ldots - \theta_{N-1})  \\
    &= \prod_{1\leq  j<k \leq N-1} |e^{i\theta_j}-e^{i\theta_k}|^2  \prod_{1\leq k \leq N-1} |e^{i\theta_k}-e^{-i(\theta_1+\ldots+\theta_{N-1})}|^2
\end{align*}

\section{Gram's Law for random matrices}

\subsection{$\U(N)$ Gram points and intervals}

In order to motivate our RMT equivalent of Gram points, we recall from the previous section the way in which the zeta function is related to the Z function
\[ \zeta\left( \frac{1}{2}+it \right) = Z(t) e^{-i\theta(t)} = Z(t)\cos\theta(t) - iZ(t)\sin\theta(t)  \]
If we want to find the points on the critical line at which the zeta function is real, we have to impose the condition that its imaginary part should be zero, from which we get
\[ \zeta\left( \frac{1}{2}+it \right)\in\mathbb R \quad \Leftrightarrow \quad \Im \zeta\left( \frac{1}{2}+it \right)=0 \quad \Leftrightarrow \quad Z(t)\sin\theta(t)=0 \]
The last condition is equivalent to the following two possibilities
\begin{itemize}
\item $ Z(t)=0$, which also gives all the zeros of $\zeta ( \frac{1}{2}+it )$;
\item $ \sin\theta(t)=0 \, \Leftrightarrow \, \theta(t)=M\pi$ for $M\in \mathbb Z$, from which we get the Gram points $g_M$.
\end{itemize}

Because the eigenvalues of a random U($N$) matrix are the RMT analogues of the zeta zeros, and the unitary circle represents the analogue of the critical line, J. Keating and N. Snaith \cite{keatingsnaith} introduced the characteristic polynomial of a unitary matrix as a RMT model for the Riemann zeta function; if $A\in$ U($N$), this is defined as
\[ \Lambda_A(\theta) :=  \det(I_N - Ae^{-i\theta} ) \]

It can be re-expressed in terms of the matrix eigenvalues $e^{i\theta_1}, \dots, e^{i\theta_N}$ as
\begin{align*}
\Lambda_A(\theta) &=  \prod_{j=1}^N(1-e^{i(\theta_j-\theta)}) \\
&= \prod_{j=1}^N \exp{\frac{i(\theta_j-\theta)}{2} } \left[ \exp{\left(-\frac{i(\theta_j-\theta)}{2} \right) } - \exp{\left(\frac{i(\theta_j-\theta)}{2}\right)} \right]   \\
& = (-2i)^N \prod_{j=1}^N \left[ \exp{\left(\frac{i(\theta_j-\theta)}{2} \right)} \sin \left( \frac{\theta_j-\theta}{2}\right) \right]  \\
& = (-2)^N \exp{ \left( i\frac{N\pi}{2} \right) } \exp{\left( i\sum_{j=1}^N \frac{\theta_j-\theta}{2}\right)}  \prod_{j=1}^N \sin \left( \frac{\theta_j-\theta}{2} \right)  \\
&  = (-2)^N \exp{\left[i \left(\frac{N\pi}{2} + \frac{\theta_1+\ldots+\theta_N}{2} - \frac{N\theta}{2} \right)\right] } \prod_{j=1}^N \sin \left( \frac{\theta_j-\theta}{2}\right)
\end{align*}
We continue the above analogy by searching for the points $\theta\in [-\pi, \pi)$ on the unit circle at which the characteristic polynomial is real
\[  \Lambda_A(\theta)\in\mathbb R \quad \Leftrightarrow \quad \Im \Lambda_A(\theta)= 0 \quad \Leftrightarrow \quad \sin\left( \frac{N\pi}{2} + \frac{\theta_1+\ldots+\theta_N}{2} -\frac{N\theta}{2} \right) \prod_{j=1}^N \sin\frac{\theta_j-\theta}{2} = 0               \]
As before, this leads to two possible cases

\begin{itemize}
\item $ \displaystyle \prod_{j=1}^N \sin\frac{\theta_j-\theta}{2}=0$
\item $ \displaystyle \sin\left( \frac{N\pi}{2} + \frac{\theta_1+\ldots+\theta_N}{2} -\frac{N\theta}{2} \right) = 0 \; \Leftrightarrow \; \frac{N\pi}{2} + \frac{\theta_1+\ldots+\theta_N}{2} -\frac{N\theta}{2} = m\pi$, for some $m\in\mathbb Z $.
\end{itemize}
From the first condition  we recover the $N$ eigenangles  $\theta \in \{\theta_1, \dots , \theta_N\}$ (which are the U($N$) analogues of the zeta zeros). From the second condition, we obtain another set of points, given by
\[ \theta \in \left\{ \frac{\theta_1+\ldots+\theta_N}{N} + \pi -\frac{2m\pi }{N} , \quad m \in \mathbb Z \right\} \]
We note that only $N$ elements of this set are distinct modulo $2\pi$, and because they represent the points on the unit circle at which the characteristic polynomial of a $\U(N)$ matrix is real (but not necessarily zero), we will consider them to be the analogous $\U(N)$ Gram points. 

\begin{definition} If $A$ is a $\U(N)$ matrix with eigenvalues $e^{i\theta_1}, \dots, e^{i\theta_N}$, we define the corresponding $\U(N)$ Gram points as
\[ \psi^{U(N)}_m := \frac{\theta_1+\ldots+\theta_N}{N} - \pi + \frac{2m\pi }{N} , \quad   m = 0,1,\ldots, N-1    \]
We also define a $\U(N)$ Gram interval as any interval on the unit circle between two consecutive $\U(N)$ Gram points.
\end{definition}

We remark that the $\U(N)$ Gram points are placed along the unit circle at equal distance from each other in steps of $\frac{2\pi}{N}$, rather than being distributed arbitrarily. Furthermore, they are not fixed on the unit circle, and are not the same for all $A\in$ U($N$) matrices, but instead depend on $\arg(\det A) = \theta_1 + \ldots + \theta_N \pmod{2\pi}$. With these definitions in mind, we can analyze what is the probability of having exactly $k$ eigenvalues of a random $\U(N)$ matrix inside one of these $\U(N)$ Gram intervals, in order to understand if and how it differs from $E_{U(N)}(k, J)$. 

We begin by studying the simplest case, that of $N=2$, which can be solved using just elementary logic, without any computations. According to the formula given above, if $A$ is a unitary matrix of size $2\times 2$, then its eiegenangles $\theta_1, \theta_2$ are related to its U(2) Gram points $\psi_1, \psi_2$ by
\[ \psi_1 = \frac{\theta_1 + \theta_2}{2}-\pi \quad \text{and} \quad \psi_2=\frac{\theta_1+\theta_2}{2} \]
Now, since $\psi_2$ is the arithmetic average of $\theta_1$ and $\theta_2$, this means that it will always be located between them on the unit circle (regardless of where they are). On the other hand, $\psi_1$ is diametrically opposed to $\psi_2$, so it will also lie between $\theta_1$ and $\theta_2$,  but on  the other side of the circle. This is equivalent to having the two $\theta_j$'s positioned between the two $\psi_j$'s, each one on a different arc. In particular, this implies that the probability of finding exactly $k=1$ eigenvalue of a random U(2) matrix inside a U(2) Gram interval will always be 100\%; it also means that the probability is zero for having an empty  U(2) Gram interval ($k=0$) or of having both eigenvalues in the same interval ($k=2$). These results are not only very different from the values on row $N=2$ of Table \ref{tab:E_U(N)(k,J)} but, more importantly, are in perfect agreement with the entries on row $N=2$ of Table \ref{tab:GramStatsLowHeight} 

This hints at the more general fact that the probability of finding exactly $k$ eigenvalues of a random U($N$) matrix in a U($N$) Gram interval gives a much better model for  $G_{M_N , M_{N+1}}(k)$ than $E_{\U(N)}(k, J)$. However, it becomes increasingly difficult to compute this quantity in a direct way for $N\geq 3$ (the problem comes from the fact that this probability is essentially an integral over the eigenangles $\theta_1, \dots, \theta_N$ and each $\U(N)$ Gram point depends on all of them). In order to overcome this difficulty, we will relate this quantity to the corresponding probability for a particular kind of $\U(N)$ matrices, namely the special unitary matrices, and then focus on computing that probability.\\

\subsection{$\SU(N)$ Gram points and intervals}

If $A$ is a $\SU(N)$ matrix then, by definition, $\arg(\det A) = \theta_1 + \ldots + \theta_N = 0 \pmod {2\pi}$, and the above U($N$) Gram points are reduced to, what we will call, the $\SU(N)$ Gram points.

\begin{definition} We define the $\SU(N)$ Gram points as
\[ \psi^{SU(N)}_m := - \pi + \frac{2m\pi }{N} , \quad   m = 0,1,\ldots, N-1    \]
We also define an $\SU(N)$ Gram interval as any interval along the unit circle between two consecutive $\SU(N)$ Gram points.
\end{definition}

Similar to the $\U(N)$ case, these represent the points on the unit circle at which the characteristic polynomial of a $\SU(N)$ matrix is real (but not necessarily zero), and they are distributed equidistant in steps of $\frac{2\pi}{N}$. However, unlike the $\U(N)$ case, the SU($N$) Gram points do not depend in any way on the eigenangles, which implies that they are the same for all SU($N$) matrices, and are also fixed on the unit circle. As we will  later see, this makes it easier to compute the probability of having exactly $k$ eigenvalues of a random $\SU(N)$ matrix inside a $\SU(N)$ Gram interval. For now, we will prove the following result, which relates this quantity with the corresponding probability from the previous subsection:

\begin{lema} For any $k=0, \dots, N$, we have that
\begin{align*}
& \text{Pr }[\text{exactly } k \text{ eigenvalues of a } U(N) \text{ matrix lie in a } U(N) \text{ Gram interval}] = \\
& = \text{Pr }[\text{exactly } k \text{ eigenvalues of a } SU(N) \text{ matrix lie in a } SU(N) \text{ Gram interval}] 
\end{align*}
where the first probability is over Haar measure for $\U(N)$ and the second probability is over Haar measure for $\SU(N)$.
\end{lema}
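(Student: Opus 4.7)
The plan is to exploit the decomposition of a Haar-distributed $\U(N)$ matrix as a scalar phase times a Haar-distributed $\SU(N)$ matrix, since this same scalar phase rotates both the eigenvalues and the $\U(N)$ Gram points by the same amount, leaving the counting statistic unchanged. Concretely, I would write any $A \in \U(N)$ as $A = e^{i\alpha} B$ with $B \in \SU(N)$, so that the eigenangles of $A$ are $\theta_j = \alpha + \phi_j \pmod{2\pi}$, where $e^{i\phi_j}$ are the eigenvalues of $B$ and $\sum_j \phi_j \equiv 0 \pmod{2\pi}$. Under Haar measure on $\U(N)$, this factorization makes $B$ Haar on $\SU(N)$, $\alpha$ uniform on $\U(1)$, and the two independent (up to an irrelevant $\mathbb{Z}_N$ identification). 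I would justify this either by invoking the standard fibration $\U(N) \simeq (\U(1) \times \SU(N))/\mathbb{Z}_N$, or by directly performing the change of variables $(\theta_1, \ldots, \theta_N) \mapsto (\phi_1, \ldots, \phi_{N-1}, \alpha)$ in the Weyl density $\mathcal{P}_{\U(N)}$: the Jacobian is $N$, the product $\mathcal{D}_{\U(N)}$ is manifestly invariant under the simultaneous shift $\theta_j \mapsto \theta_j - \alpha$, and the resulting density separates into a $\phi$-dependent factor proportional to $\mathcal{D}_{\SU(N)}$ and a uniform density in $\alpha$.

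Next, I would read off from the definitions that, as unordered sets on the unit circle,
\[
\{\psi^{U(N)}_m(A) : 0 \leq m \leq N-1\} = \{\alpha + \psi^{SU(N)}_m : 0 \leq m \leq N-1\},
\]
since $\bar\theta := (\theta_1 + \ldots + \theta_N)/N \equiv \alpha \pmod{2\pi/N}$ and the $\psi^{U(N)}_m$ are equally spaced by $2\pi/N$. Hence the collection of $N$ $\U(N)$ Gram intervals for $A$ is obtained from the collection of $N$ $\SU(N)$ Gram intervals by rigid rotation through the angle $\alpha$. Combined with $\theta_j = \alpha + \phi_j$, this gives a bijection between eigenvalues of $A$ inside a given $\U(N)$ Gram interval and eigenvalues of $B$ inside the correspondingly rotated $\SU(N)$ Gram interval, so the two counts are equal as random variables on the common probability space.

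Finally, taking expectations and using the independence of $\alpha$ and $B$ reduces the $\U(N)$ probability to the $\SU(N)$ probability, up to a possible reshuffling of which $\SU(N)$ Gram interval is relevant. This reshuffling is immaterial because the cyclic subgroup $\{e^{2\pi i k/N} I_N : 0 \leq k < N\}$ lies inside $\SU(N)$ and preserves its Haar measure, so by rotation invariance the probability of containing exactly $k$ eigenvalues is independent of which of the $N$ equally-spaced $\SU(N)$ Gram intervals is chosen. The main delicate step is the justification of the factorization of Haar on $\U(N)$ into independent Haar on $\SU(N)$ and uniform $\U(1)$; once that is settled, the rest is elementary bookkeeping driven by the observation that a single rigid rotation by $\alpha$ translates eigenvalues and Gram points in lockstep.
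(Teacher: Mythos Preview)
Your approach is correct and rests on the same key observation as the paper's proof---that shifting every eigenangle by the centre of mass $\bar\theta$ (equivalently, by your scalar phase $\alpha$) moves the eigenvalues and the Gram points in lockstep---but the execution is genuinely different. The paper works entirely at the level of integrals: it writes the $\U(N)$ probability as an $N$-fold integral over a region $\mathcal{R}$ defined by inequalities involving $\bar\theta$, performs the explicit change of variables $\lambda_n = \theta_n - \bar\theta$ for $n<N$ together with $\lambda_N = N\theta_N$, verifies by hand that the Jacobian is $1$ and that $\mathcal{D}_{\U(N)}(\theta) = \mathcal{D}_{\SU(N)}(\lambda)$, integrates out the free variable $\lambda_N$, and then reintroduces it via the identity $\chi_I(x) = \int_I \delta(y-x)\,dy$ to recover $\mathcal{P}_{\SU(N)}$. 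Your argument replaces all of this bookkeeping by the single probabilistic statement that the count of eigenvalues in a Gram interval is a function of $B$ alone, so its law under Haar on $\U(N)$ coincides with its law under Haar on $\SU(N)$. What you gain is brevity and transparency: the region-tracking, the separate handling of the last coordinate, and the delta-function trick all vanish. What the paper's route offers is self-containment within the Weyl-integration formalism already established in the preceding sections, with no appeal to the fibration $\U(N)\simeq(\U(1)\times\SU(N))/\mathbb{Z}_N$ or to facts about pushforwards of Haar measure.
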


\begin{proof}
Let $e^{i\theta_1},\dots,e^{i\theta_N}$ be the eigenvalues of a U$(N)$ matrix. For simplicity, we will use
\[ \mathcal I = \left[ \frac{\theta_1+\ldots +\theta_N}{N} - \pi, \quad \frac{\theta_1+\ldots +\theta_N}{N} - \pi+\frac{2\pi}{N} \right) \pmod{2\pi} \]
as a generic $\U(N)$ Gram interval and denote its complement by
 \[ [-\pi, \pi)\smallsetminus \mathcal I = \left[ \frac{\theta_1+\ldots +\theta_N}{N} - \pi + \frac{2\pi}{N}, \quad \frac{\theta_1 + \ldots + \theta_N}{N} + \pi \right) \pmod{2\pi} \]

Because the $\U(N)$ probability density is a symmetric function in all eigenangles, it can be shown that the probability of having $k$ eigenvalues of a $\U(N)$ matrix in a $\U(N)$ Gram interval is the same, for any $\U(N)$ Gram interval and for any $k$ eigenvalues. Starting with this fact, we have that
\begin{align}
  & \text{Pr }[\text{exactly } k \text{ eigenvalues of a U}(N) \text{ matrix lie in a U}(N) \text{ Gram interval}] \notag\\
  & = \binom{N}{k} \text{Pr }[\theta_1, \dots , \theta_k \in \mathcal I \> \text{ and } \> \theta_{k+1}, \dots , \theta_N\in [-\pi, \pi)\smallsetminus \mathcal I ] \notag \\
  & = \binom{N}{k} \int \dots \int_{\mathcal R} \mathcal P_{U(N)} (\theta_1, \dots , \theta_N) \; d\theta_1 \dots d\theta_N   \notag \\ 
  & = \binom{N}{k} \frac{1}{N!(2\pi)^N}\int\dots\int_{\mathcal R}   \mathcal D_{U(N)}(\theta_1, \dots, \theta_N) \; d\theta_1\dots d\theta_N \label{eq:Prob_k_evals_UN}
\end{align}
where the $N$-dimensional integral is over a region $\mathcal R$ described by the restrictions
\[ 
\mathcal R : 
\begin{cases}
	\theta_1, \dots , \theta_k \in \mathcal I \\
	\theta_{k+1}, \dots , \theta_N\in [-\pi, \pi) \smallsetminus \mathcal I
\end{cases}
\]
which can be written out explicitly as
\[
\mathcal R : 
\begin{cases}
	\displaystyle \frac{\theta_1 + \ldots + \theta_N}{N} - \pi \leq \theta_n < \frac{\theta_1 + \ldots + \theta_N}{N} - \pi + \frac{2\pi}{N}    \quad (n = 1,\dots, k)  \medskip  \\
	\displaystyle \frac{\theta_1 + \ldots + \theta_N}{N} - \pi + \frac{2\pi}{N} \leq \theta_n < \frac{\theta_1 + \ldots + \theta_N}{N} + \pi   \quad (n = k+1,\dots, N)
\end{cases}
\]
\[ 
\Leftrightarrow \quad \mathcal R : 
\begin{cases}
	\displaystyle -\pi \leq \theta_n - \frac{\theta_1 + \ldots + \theta_N}{N} < -\pi + \frac{2\pi}{N}   \quad  (n = 1,\dots, k)  \medskip    \\
	\displaystyle -\pi+\frac{2\pi}{N} \leq \theta_n - \frac{\theta_1 + \ldots + \theta_N}{N} < \pi       \quad (n = k+1,\dots, N)
\end{cases}
\]
\quad \\
We now perform the following change of variables:
\begin{align*}
	\lambda_n & = \theta_n - \frac{\theta_1 + \ldots + \theta_N}{N}   \quad (n = 1,\dots, N-1)  \medskip  \\
	\lambda_N & = N \theta_N
\end{align*}
The determinant of the Jacobian matrix is 1. Note that modulo $2\pi$ the restriction on $\theta_N$ is lost when it comes to considering $\lambda_N$. Furthermore, with this change of variables, we have that
\[ \theta_N -  \frac{\theta_1 + \ldots + \theta_N}{N} = - \lambda_1 - \ldots - \lambda_{N-1} \]
and the previous region of integration $\mathcal R$ is now described by the conditions
\[ \mathcal R' :
\begin{cases}
	\displaystyle -\pi \leq \lambda_n < -\pi+\frac{2\pi}{N}       \quad (n = 1,\dots, k)  \medskip \\
	\displaystyle -\pi+\frac{2\pi}{N} \leq \lambda_n <\pi         \quad (n = k+1,\dots, N-1)  \medskip \\
	\displaystyle -\pi+\frac{2\pi}{N} \leq -\lambda_1- \ldots - \lambda_{N-1} < \pi
\end{cases}
\]
If we denote a generic $\SU(N)$ Gram interval and its complement by
\[ \mathcal J = \left[ -\pi, \> -\pi+\frac{2\pi}{N} \right) \quad \text{and} \quad [-\pi, \pi) \smallsetminus \mathcal J = \left[ -\pi+\frac{2\pi}{N} , \> \pi \right) \]
then $\mathcal R'$ becomes
\[ \mathcal R' :
\begin{cases}
	\lambda_1, \dots , \lambda_n \in \mathcal J \\
	\lambda_{n+1}, \dots , \lambda_{N-1} \in [-\pi, \pi)\smallsetminus \mathcal J \\
	-\lambda_1- \ldots - \lambda_{N-1}  \in [-\pi, \pi)\smallsetminus \mathcal J
\end{cases}
\]
Since there is no restriction imposed on $\lambda_N$, it can be taken $\lambda_N \in [-\pi, \pi)$

The old variables $\theta_n$ can be expressed in terms of the new variables $\lambda_n$ as:
\begin{align*}
	\theta_n & = \lambda_n + (\lambda_1 + \ldots + \lambda_{N-1}) + \frac{\lambda_N}{N}  \quad (n = 1,\dots, N-1)   \medskip \\
	\theta_N & =\frac{\lambda_N}{N}
\end{align*}
We note that
\[ \theta_m -\theta_n = \lambda_m -\lambda_n \quad \text{for} \quad m,n = 1,\dots, N-1 \]
and
\[ \theta_n - \theta_N= \lambda_n + (\lambda_1 + \ldots + \lambda_{N-1})  \quad \text{for} \quad  n = 1,\dots, N-1 \]
which implies that
\begin{align*}
\mathcal D_{U(N)}(\theta_1, \dots, \theta_N) &= \prod_{1\leq m<n \leq N} |e^{i\theta_m}-e^{i\theta_n}|^2 \\
&= 2^{N(N-1)} \prod_{1\leq m<n \leq N} \left( \sin\frac{\theta_m -\theta_n}{2} \right)^2  \\
& = 2^{N(N-1)}  \prod_{1\leq m<n \leq N-1} \left( \sin\frac{\theta_m - \theta_n}{2} \right)^2 \prod_{1\leq n \leq N-1} \left( \sin\frac{\theta_n - \theta_N}{2} \right)^2  \\
& = 2^{N(N-1)} \prod_{1\leq m<n \leq N-1} \left( \sin\frac{\lambda_m - \lambda_n}{2} \right)^2 \prod_{1\leq  n \leq N-1} \left( \sin\frac{\lambda_n + (\lambda_1 + \ldots + \lambda_{N-1})}{2} \right)^2        \\
& =  \prod_{1\leq m<n \leq N-1} |e^{i\lambda_m}-e^{i\lambda_n}|^2  \prod_{1\leq n \leq N-1} |e^{i\lambda_n}-e^{-i(\lambda_1 + \ldots + \lambda_{N-1})}|^2 \\
&= \mathcal D_{SU(N)}(\lambda_1, \dots , \lambda_{N-1})
\end{align*}

Putting everything together, we obtain that the initial integral \eqref{eq:Prob_k_evals_UN} can be expressed in the new system of variables as
\begin{align*}
& \binom{N}{k} \frac{1}{N!(2\pi)^N}\int\dots\int_{\mathcal R}   \mathcal D_{U(N)}(\theta_1, \dots, \theta_N) \; d\theta_1\dots d\theta_N    \\
& = \binom{N}{k} \frac{1}{N!(2\pi)^N} \int_{-\pi}^{\pi} \left[ \int\dots\int_{\mathcal R'} \mathcal  D_{SU(N)}(\lambda_1, \dots , \lambda_{N-1}) \; d\lambda_1 \dots d\lambda_{N-1} \right] d\lambda_N \\
& =  \binom{N}{k} \frac{1}{N!(2\pi)^{N-1}} \int\dots\int_{\mathcal R'}   \mathcal  D_{SU(N)}(\lambda_1, \dots , \lambda_{N-1}) \; d\lambda_1 \dots d\lambda_{N-1}
\end{align*}
If we write down explicitly the conditions of $\mathcal R'$ into the integral, it becomes
\begin{multline*}
\binom{N}{k} \frac{1}{N!(2\pi)^{N-1}} \int_{\mathcal J^k}\int_{([-\pi, \pi) \smallsetminus \mathcal J)^{N-1-k}}  \mathcal  D_{SU(N)}(\lambda_1, \dots , \lambda_{N-1}) \times \\
\times \chi_{[-\pi, \pi) \smallsetminus \mathcal J}(-\lambda_1-\ldots - \lambda_{N-1} \text{ mod } 2\pi) \; d\lambda_1 \dots d\lambda_{N-1}
\end{multline*}
where $\chi_I(x)$ denotes the characteristic function (also known as the indicator function) of the interval $I$. Now, because 
\[  \chi_I(x) = \int_{I} \delta(y-x) dy  \]
we can re-introduce into our integral the variable $\lambda_N$ (that was previously integrated out) and obtain
\begin{align*}
&\binom{N}{k} \frac{1}{N!(2\pi)^{N-1}} \int_{\mathcal J^k}\int_{([-\pi, \pi) \smallsetminus \mathcal J)^{N-k}}  \mathcal  D_{U(N)}(\lambda_1, \dots , \lambda_N) \cdot \delta( \lambda_1 + \ldots + \lambda_N \text{ mod } 2\pi ) \; d\lambda_1 \dots d\lambda_N  \\
& = \binom{N}{k} \int_{\mathcal J^k}\int_{([-\pi, \pi) \smallsetminus \mathcal J)^{N-k}}  \mathcal  P_{SU(N)}(\lambda_1, \dots , \lambda_N) \;  d\lambda_1 \dots d\lambda_N  \\
  & = \binom{N}{k} \text{Pr }[ \lambda_1, \dots , \lambda_k \in \mathcal J \> \text{ and } \> \lambda_{k+1}, \dots , \lambda_N\in [-\pi, \pi)\smallsetminus \mathcal J \text{ and } \lambda_1 + \ldots + \lambda_N = 0 \text{ mod } 2\pi ] \\
&= \text{Pr }[\text{exactly } k \text{ eigenvalues of a SU}(N) \text{ matrix lie in a SU}(N) \text{ Gram interval}]
  \end{align*}
as required. In the last step we have used, as in the beginning, the fact that for any $k$ the probability of having $k$ eigenvalues of a $\SU(N)$ matrix in a $\SU(N)$ Gram interval is the same, for any $\SU(N)$ Gram interval and any $k$ eigenvalues.
\end{proof}
\quad \\

In analogy with the quantity from \eqref{eq:eun}, we will denote the later probability of the above lemma by $ E_{SU(N)}(k, \mathcal J) $. \\

\begin{definition} 
We define the probability of having exactly $k$ unscaled eigenvalues of a random Haar-distributed  $\SU(N)$ matrix inside a $\SU(N)$ Gram interval $\mathcal J$ as
\begin{equation} \label{eq:esun}
  E_{SU(N)}(k, \mathcal J)  :=  \binom{N}{k} \int _{\mathcal J^k} \int_{([-\pi, \pi)\smallsetminus \mathcal J)^{N-k}}  \mathcal P_{SU(N)}(\theta_1, \ldots, \theta_N) \, d\theta_1 \ldots d\theta_N
\end{equation} \\
\end{definition} 

As we have just proved, $  E_{SU(N)}(k, \mathcal J)$ also represents the probability of finding $k$ eigenvalues of a random $\U(N)$ matrix in a $\U(N)$ Gram interval, and by using the above formula, we can compute it numerically for the same values of  $k$ and $N$ as in Table  \ref{tab:E_U(N)(k,J)}; this is presented in Table~\ref{tab:E_SU(N)(k,J)}.

\begin{table}[h] 
  \centering
\begin{tabular}{| r | c | c | c |}
\hline
 $N$ &  $E_{SU(N)}(0,\mathcal J)$  &  $E_{SU(N)}(1, \mathcal J)$  &  $E_{SU(N)}(2, \mathcal J)$  \\
\hline
 2  & & 1.000000 & \\
 3  & 0.023074 & 0.954844 & 0.021090 \\
 4  & 0.040362 & 0.920641 & 0.037630 \\
 5  & 0.067146 & 0.867251 & 0.064059 \\
 6  & 0.084764 & 0.832111 & 0.081483 \\
 7  & 0.097237 & 0.807224 & 0.093839 \\
 8  & 0.106532 & 0.788673 & 0.103058 \\
 9  & 0.113727 & 0.774310 & 0.110200 \\
10 & 0.119461 & 0.762860 & 0.115896 \\
11 & 0.124138 & 0.753520 & 0.120545 \\
12 & 0.128026 & 0.745755 & 0.124412 \\
13 & 0.131309 & 0.739197 & 0.127678 \\
14 & 0.134118 & 0.733586 & 0.130474 \\
15 & 0.136549 & 0.728730 & 0.132894 \\
16 & 0.138673 & 0.724486 & 0.135009 \\
17 & 0.140545 & 0.720746 & 0.136874 \\
18 & 0.142207 & 0.717424 & 0.138530 \\
19 & 0.143693 & 0.714455 & 0.140011 \\
20 & 0.145029 & 0.711785 & 0.141343 \\
21 & 0.146237 & 0.709371 & 0.142547 \\
\hline
\end{tabular}
\medskip
\caption{$E_{SU(N)}(k,\mathcal J)$  for $k=0,1,2$ and $N=2,\ldots, 21$} \label{tab:E_SU(N)(k,J)} \label{tab:esun}
\end{table}

On the one hand, if we compare those values with the corresponding values from Table~\ref{tab:E_U(N)(k,J)}, we see that $E_{SU(N)}(k,\mathcal J)$ has a different rate of convergence, in the sense that it doesn't converge to its large $N$ limit as fast as $E_{U(N)}(k, J)$. On the other hand, if we look at the entries in Table~\ref{tab:GramStatsLowHeight}, we notice that for each $k$ and $N$, $E_{SU(N)}(k,\mathcal J)$ does provide a good approximation for  $G_{M_N , M_{N+1}}(k)$, in accordance with Conjecture 1.

The observation that these two quantities appear to have the same rate of convergence hints at the possibility that they should also have the same asymptotic limit, which leads us to put forward the following alternative to Conjecture 2:

\begin{conjecture}
For any $k\in \mathbb N \cup \{0\}$
\[  \lim_{N\to\infty} E_{SU(N)}(k, \mathcal J)  = \lim_{M\to\infty}  G_{0,M}(k)  \]
\end{conjecture}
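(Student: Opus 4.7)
The plan is to derive Conjecture 3 from Conjecture 1 (Montgomery--Dyson) combined with the Lemma proved above. Conjecture 1 asserts that near height $T$ on the critical line, the normalized zeta zeros are statistically distributed like the eigenvalues of a random Haar $\U(N)$ matrix with $N\approx\log(T/2\pi)$. Since Gram points are characterized as the points on the critical line where $\zeta(\frac12+it)$ takes real values, and since the $\U(N)$ Gram points introduced in Section 2.1 are by construction the points on the unit circle at which the characteristic polynomial $\Lambda_{A}(\theta)$ is real, the natural RMT analogue of a Gram interval is a $\U(N)$ Gram interval. The Lemma then converts this into the $\SU(N)$ quantity on the right--hand side of Conjecture 3.

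Carrying out this program, I would first write $G_{0,M}(k)$ as a weighted average of block proportions $G_{M_N,M_{N+1}}(k)$, using the partition of $\mathbb N$ supplied by the indices $M_N\approx e^N(N-1)$ derived in the excerpt. Across such a block, the height $g_m$ varies only by a factor of $e$, so the matching matrix size stays essentially constant at $N$; under a sufficiently strong form of Conjecture 1, the joint local statistics of the normalized zeros around each Gram point in the block then agree (in distribution) with the joint statistics of $\U(N)$ eigenangles around a $\U(N)$ Gram point, so that
\begin{equation*}
G_{M_N,M_{N+1}}(k) \longrightarrow \lim_{N\to\infty}\Pr[\text{exactly }k\text{ eigenvalues of a random }\U(N)\text{ matrix lie in a }\U(N)\text{ Gram interval}].
\end{equation*}
By the Lemma the right--hand side equals $\lim_{N\to\infty}E_{\SU(N)}(k,\mathcal J)$. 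A standard Ces\`aro--type argument then promotes convergence along the sequence $\{M_N\}$ to convergence of $G_{0,M}(k)$ itself: the block lengths $M_{N+1}-M_N$ grow like $e^N$, so the final block dominates the sum and the weighted average inherits the same limit.

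The main obstacle is the gap between what Conjecture 1 asserts and what is known unconditionally. The quantity $E_{\SU(N)}(k,\mathcal J)$ is built from the full joint distribution of the eigenangles in a window of mean--spacing size, and thus implicitly encodes every $n$--level correlation function; current number--theoretic results give only pair correlation (in a restricted range) and low moments of $\zeta$, whereas the predicted convergence of $G_{M_N,M_{N+1}}(k)$ for arbitrary $k$ requires control of all local correlations of the zeros. A more delicate secondary difficulty is that the $\U(N)$ Gram points depend on $\arg\det A$, i.e.\ on a global phase of the matrix; the corresponding number--theoretic object is the Riemann--Siegel theta function, and the reduction above implicitly assumes an equidistribution statement for $\theta(t)\pmod\pi$ at the zeros, which is itself morally equivalent to a strong local form of Conjecture 1.

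In short, conditional on a sufficiently strong form of Montgomery--Dyson, Conjecture 3 follows from the Lemma by the blockwise Ces\`aro argument sketched above. An unconditional proof lies well beyond current methods; the contribution of the present framework is rather the identification of $E_{\SU(N)}(k,\mathcal J)$, as opposed to $E_{\U(N)}(k,J)$, as the RMT quantity against which the empirical data of Table~\ref{tab:GramStatsLowHeight} should be compared.
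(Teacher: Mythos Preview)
The statement you are addressing is labelled \emph{Conjecture~3} in the paper, and the paper offers no proof of it; it is proposed on the strength of the numerical agreement between Tables~\ref{tab:GramStatsLowHeight} and~\ref{tab:esun} together with the heuristic that the $\U(N)$ Gram points (and hence, via the Lemma, the $\SU(N)$ Gram intervals) are the correct RMT analogue of number--theoretic Gram points. Your write--up correctly reconstructs exactly this motivation: the Lemma plus a sufficiently strong form of Conjecture~1 would yield the asserted equality, and you rightly flag that such a strong form (control of all local correlations, equidistribution of the global phase) is far beyond what is known. In that sense your conditional/heuristic derivation matches the paper's reasoning, but you should be explicit that there is no ``paper's own proof'' to compare with---the paper states this as an open conjecture supported by data, not as a theorem.
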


Finally, we remark that although $ E_{SU(N)}(k, \mathcal J) $ converges at a slower rate compared to  $E_{U(N)}(k, J)$, this does not necessarily imply that they don't tend to the same limit; in order to clarify whether this is or not the case, we have to obtain a more explicit formula that describes how $ E_{SU(N)}(k, \mathcal J) $  depends on the matrix size for finite, but arbitrarily large $N$. This will be the topic of the next section.

\section{Explicit formulas for $E_{\SU(N)}(k, \mathcal J)$}

\subsection{$\U(N)$ $n$-level densities and generating function}

One of the main quantities of interest needed for studying how the eigenvalues of random matrices are distributed  is the $n$-level density, defined as follows:

\begin{definition} The $\U(N)$ n-level density  (also called the the $n$-point correlation function) for the eigenangles of a unitary matrix is defined as
 \begin{align*}
 R^n_{U(N)}(\theta_1, \ldots, \theta_n) & := \frac{N!}{(N-n)!} \int_{[-\pi, \pi)^{N-n}} \mathcal P_{U(N)}(\theta_1, \ldots, \theta_N) \; d\theta_{n+1} \ldots d\theta_N  \\
						  & \quad = \frac{1}{(N-n)!(2\pi)^N} \int_{[-\pi, \pi)^{N-n}} \mathcal D_{U(N)}(\theta_1, \ldots, \theta_N) \; d\theta_{n+1} \ldots d\theta_N
\end{align*}
where $n=1,\ldots, N$. It represents the probability of finding an eigenangle (regardless of labeling) around each of the points $\theta_1,\ldots,\theta_n$, and with all the other eigenangles being integrated out. 
\end{definition}

In order to go from the $n$-level density to the number of eigenvalues in an interval, we first note that the Dyson product can be expanded in the form of a trigonometric Fourier series

\begin{align*}
\mathcal D_{U(N)}(\theta_1,\ldots,\theta_N) &= \prod_{1\leq j<k\leq N} |e^{i\theta_j}-e^{i\theta_k}|^2\\
& = \prod_{1\leq j < k\leq N} [ 2 - 2\cos(\theta_j - \theta_k) ] \\
& = \sum_{ \substack{ (j_1, \ldots, j_N)\in \mathbb Z^N \\ j_1+\ldots+j_N=0 \\ |j_l|<N  } } c_{j_1,\ldots , j_N} \cdot \cos(j_1\theta_1 + \ldots + j_N\theta_N)
\end{align*}
 where  $ c_{j_1,\ldots , j_N}\neq 0$  if and only if the corresponding $j_1, \dots, j_N \in \mathbb Z$ satisfy the conditions $j_1+\ldots+j_N=0$ and $|j_l|<N$ for all $l=1,\ldots,N$. This implies that the  $n$-level density becomes
\begin{multline*}
 R^n_{U(N)}(\theta_1, \ldots, \theta_n) = \\
  \frac{1}{(N-n)!(2\pi)^N}  \sum_{ \substack{ (j_1, \ldots, j_N)\in \mathbb Z^N \\ j_1+\ldots+j_N=0 \\ |j_l| < N  } } \left[ c_{j_1,\ldots , j_N} \int_{[-\pi, \pi)^{N-n}} \cos(j_1\theta_1 + \ldots + j_N\theta_N) \; d\theta_{n+1} \ldots d\theta_N \right] 
 \end{multline*}
The above expression can be simplified with the following observation: for each cosine term in the sum, if any of the coefficients $j_m$ corresponding to the integration variable $\theta_m $ ($m=n+1, \dots, N$) is non-zero, then the integral over that term is zero, since the cosine is being integrated over a full period.  This implies that the only the terms in the sum that give non-zero contributions to the $n$-level density are those terms for which $j_{n+1} = \ldots = j_N = 0$ or, in other words, the terms that depend only on the eigenangles $\theta_1, \dots, \theta_n$, together with the constant term $c_{0,\ldots, 0}$.

In the particular case of the  $\U(N)$ one-level density, because there is no cosine term in $ \mathcal D_{U(N)}(\theta_1,\ldots,\theta_N)$ that depends only on $\theta_1$, the constant term $c_{0,\ldots, 0}$ is the one that makes the only non-zero contribution. It is known that $c_{0,\ldots, 0}=N!$ \cite{good}, \cite{gunson}, \cite{wilson}, \cite{zeilberger}, which implies that
\[ \mathcal D_{U(N)}(\theta_1,\ldots,\theta_N)  = N! +\ldots \]
 and, therefore
 \[  R^1_{U(N)}(\theta_1)  = \frac{N}{2\pi} \]

Similarly, for the $\U(N)$ two-level density, we get a contribution from the constant term, together with the terms that depend only on $\theta_1, \theta_2$, which are
\[ \mathcal D_{U(N)}(\theta_1, \ldots, \theta_N) = N! - (N-2)!\cdot 2 \sum_{a=1}^{N-1} (N-a)\cos(a\theta_1 - a\theta_2) + \ldots \]
and, as a consequence
\[ R^2_{U(N)}(\theta_1, \theta_2) = \frac{1}{(2\pi)^2} \left[ (N-1)N - 2\sum_{a=1}^{N-1} (N-a) \cos(a\theta_1-a\theta_2) \right]  \]

With this approach, exact knowledge of the $\U(N)$ $n$-level density requires exact knowledge of the coefficients of all terms in $\mathcal D_{U(N)}(\theta_1, \ldots, \theta_N)$ that depend only on $\theta_1, \ldots, \theta_n$, but in practice, these coefficients become more and more difficult to compute explicitly as $n$ increases. 

\begin{remark}
In the case of the $\U(N)$ group, there is also another, more elegant approach for computing the $n$-level densities, which does not require knowledge of any of the above coefficients; this method is described, for example, in \cite{conrey}, \cite{forrester}, \cite{fyodorov}. However, that approach can not be applied to $\SU(N)$ matrices, essentially because $\mathcal P_{SU(N)}(\theta_1, \ldots, \theta_N)$ does not appear to have a determinant form.
\end{remark}

In addition to the $n$-level density, the other important quantity required for the study of eigenvalue distribution is the so-called generating function.

\begin{definition} If $J\subset [-\pi, \pi)$ is an arbitrary interval on the unit circle of any length and $ E_{\U(N)}(k, J)$ is defined as in $\eqref{eq:eun}$, the $\U(N)$ generating function is given by
\[ \mathcal E_{U(N)}(z, J) := \sum_{k=0}^N (1+z)^k E_{U(N)}(k, J)  \]
\end{definition}

If the generating function is know, one can immediately recover the desired probabilities through repeated differentiation
\begin{equation}   \label{eq:eun2}
E_{U(N)}(k, J) = \frac{1}{k!} \left. \left( \frac{d^k}{d z^k}  \mathcal E_{U(N)}(z, J) \right) \right|_{z=-1}  
\end{equation}

It can be shown that the generating function can also be expressed in terms of all the $n$-level densities as a sum
\begin{equation}  \label{eq:ungenfunc}
\mathcal E_{U(N)}(z, J) =  1 + \sum_{n=1}^N \frac{z^n}{n!} \int_{J^n} R^n_{U(N)}(\theta_1, \ldots, \theta_n) ]\, d\theta_1\ldots d\theta_n
\end{equation}
In the case of $\U(N)$ (which does not hold for $\SU(N)$) an identity that is due to Gram himself shows that this sum is equal to a $N\times N$ determinant
\[  1 + \sum_{n=1}^N \frac{z^n}{n!} \int_{J^n} R^n_{U(N)}(\theta_1, \ldots, \theta_n) \, d\theta_1\ldots d\theta_n  =  \det_{N\times N}\left[ I_N + \frac{z}{2\pi}\left( \int_J e^{i(j-k)\theta} d\theta \right)_{1\leq j,k\leq N} \right]   \]

Putting together the above equations, we obtain an analytic formula for each  $E_{U(N)}(k, J)$ as the $k$-th order derivative of this determinant
\[ E_{U(N)}(k, J) = \frac{1}{k!} \cdot \left. \frac{d^k}{d z^k}  \left\{ \det_{N\times N}\left[ I_N + \frac{z}{2\pi}\left( \int_J e^{i(j-k)\theta} d\theta \right)_{1\leq j,k\leq N} \right] \right\} \right|_{z=-1}  \]
\\
\subsection{$\SU(N)$ $n$-level densities and generating function}

We now proceed to extend the notions defined above to $\SU(N)$ matrices, in order to derive a formula for $E_{SU(N)}(k, \mathcal J)$.
 \begin{definition} We define the $\SU(N)$ $n$-level density of a special unitary matrix as
\begin{multline} \label{eq:sunlevel}
 R^n_{SU(N)}(\theta_1, \ldots, \theta_n) := \frac{N!}{(N-n)!} \int_{[-\pi,\pi)^{N-n}} \mathcal P_{SU(N)}(\theta_1, \ldots, \theta_N) \; d\theta_{n+1} \ldots d\theta_N \\
= \frac{1}{(N-n)!(2\pi)^{N-1}} \int_{[-\pi,\pi)^{N-n-1}} \mathcal D_{SU(N)}(\theta_1, \ldots, \theta_{N-1}) \; d\theta_{n+1} \ldots d\theta_{N-1} 
\end{multline}
This time, we can only have $n=1, \ldots,  N-1$, because if the values of $N-1$ eigenangles are given,  then the $N$-th one is already determined by the restriction $\theta_1+\ldots+\theta_N=0 \pmod{2\pi}$, so it can not be assigned to an arbitrary value.
\end{definition}

As in the $\U(N)$ case, knowledge of the $\SU(N)$ $n$-level density is based on on knowledge of the coefficients of all terms in $\mathcal D_{SU(N)}(\theta_1, \dots, \theta_{N-1})$ that depend only on $\theta_1, \ldots, \theta_n$ which, in turn, depends on knowledge of the corresponding terms in $\mathcal D_{U(N)}(\theta_1, \dots, \theta_N)$ before the substitution $\theta_N = -\theta_1 - \ldots - \theta_{N-1}$ was made. \\

For example, to obtain the $\SU(N)$ one-level density, we first remark that
\[ \mathcal D_{U(N)}(\theta_1,\ldots,\theta_N)  = N! + 2(-1)^{N-1}(N-1)!\cos((N-1)\theta_1 - \theta_2 - \ldots - \theta_N) + \ldots  \]
which, after the substitution $\theta_N = -\theta_1 - \ldots - \theta_{N-1}$ becomes
\[ \mathcal D_{SU(N)}(\theta_1, \dots , \theta_{N-1}) = N! + 2(-1)^{N-1}(N-1)!\cos( N\theta_1) + \ldots \]

So we see that, unlike $\mathcal D_{U(N)}(\theta_1,\ldots,\theta_N) $, the $\mathcal D_{SU(N)}(\theta_1, \dots , \theta_{N-1}) $ does contain a term that depends only on $\theta_1$.
And because $ \mathcal D_{SU(N)}(\theta_1, \dots , \theta_{N-1}) $ has the same constant term as $\mathcal D_{U(N)}(\theta_1,\ldots,\theta_N) $, we can express the $\SU(N)$ one-level density as the $\U(N)$ one-level density plus the integral of that additional term
\begin{align*}
R^1_{SU(N)}(\theta_1) &= \frac{N}{2\pi}+  \frac{ 2(-1)^{N-1}\cos (N\theta_1)}{2\pi} \\
&=  R^1_{U(N)}(\theta_1) +  \frac{ 2(-1)^{N-1}\cos (N\theta_1)}{2\pi}
\end{align*}

Similarly, in order to compute the $\SU(N)$ two-level density, it can be shown that  $ \mathcal D_{SU(N)}(\theta_1, \dots , \theta_{N-1}) $ contains all the terms in $\mathcal D_{U(N)}(\theta_1,\ldots,\theta_N) $ that depended only on $\theta_1,\theta_2$ plus several additional ones, given by

\begin{align*}
  \mathcal D_{SU(N)}&(\theta_1, \ldots, \theta_{N-1} ) = \\
   &  \left[ N! - (N-2)!  \sum_{a=1}^{N-1} 2(N-a) \, \cos(a\theta_1 - a\theta_2) \right]   +  4(N-2)!\cos(N\theta_1 + N\theta_2) + \\
 & - 2(N-2)! [ \cos((N+1)\theta_1 + (N-1)\theta_2) +\cos((N-1)\theta_1 + (N+1)\theta_2) ] +  \\
 & + 2(-1)^{N-1}(N-1)!  \cos(N\theta_1) +  2(-1)^{N-1}(N-1)! \cos(N\theta_2) + \\
& + 4 (-1)^{N-2}  (N-2)! \mathop{ \sum_{k_1=1}^{N-1} \sum_{k_2=1}^{N-1}}_{k_1 + k_2 = N} \cos( k_1 \theta_1 + k_2 \theta_2 ) + \ldots 
\end{align*}

This, again, leads to the fact that the $\SU(N)$ two-level density can be written in terms of the $\U(N)$ two-level density plus an additional contribution
\begin{align*}
   R^2_{SU(N)}(\theta_1, \theta_2) &= R^2_{U(N)}(\theta_1, \theta_2)  +  \frac{2}{(2\pi)^2}  [  2 \cos(N\theta_1+N\theta_2) - \\
						& \quad - \cos((N+1)\theta_1+(N-1)\theta_2)  - \cos((N-1)\theta_1+(N+1)\theta_2) +    \\
						& \quad + (-1)^{N-1}(N-1)\cos(N\theta_1) + (-1)^{N-1}(N-1)\cos(N\theta_2) + \\
						& \quad + 2(-1)^{N-2} \mathop{ \sum_{k_1 = 1}^{N-1} \sum_{k_2=1}^{N-1}}_{k_1 + k_2=N} \cos(k_1 \theta_1 + k_2 \theta_2)  ] 
\end{align*}

The above observations can be summarized and generalized in the following way: let $n=1, \ldots, N-1$ be fixed. All the terms in $\mathcal D_{U(N)}(\theta_1,\ldots,\theta_N) $  that depend only on $\theta_1 , \ldots , \theta_n $ also appear unchanged in  $ \mathcal D_{SU(N)}(\theta_1, \dots , \theta_{N-1}) $ and this, in turn, implies that all the terms of the $\U(N)$ $n$-level density are always included among the terms of the $\SU(N)$ $n$-level density. Furthermore, for any $m=1,\ldots, n$ there are other terms in $\mathcal D_{U(N)}(\theta_1,\ldots,\theta_N) $ that depend on all the eigenangles, such as
\[ \cos( j_1\theta_1 + \ldots + j_n \theta_n - m\theta_{n+1} -\ldots  - m\theta_N ) ,\quad \text{ with } \quad j_1 + \ldots + j_n=m(N-n) \]
which, after the substitution  $\theta_N = -\theta_1 - \ldots - \theta_{N-1}$, depend only on $\theta_1, \ldots, \theta_n$
\[ \cos( (j_1+m)\theta_1 + \ldots + (j_n+m) \theta_n ) \]
In other words, these are terms that do not appear in the $\U(N)$ $n$-level density (those are the terms for which $m=0$), but contribute to the $\SU(N)$ $n$-level density; we will denote all these extra terms collectively by
\begin{multline*}
 X^n (\theta_1, \ldots, \theta_n)  := \\
\frac{2}{(2\pi)^n} \sum_{m=1}^n \sum_{ \substack{ (j_1, \ldots , j_n) \in \mathbb Z^n  \\ j_1 + \ldots + j_n =m(N-n)  \\  -N < j_1, \ldots , j_n < N} } \cos((j_1 + m)\theta_1 + \ldots + (j_n+m)\theta_n) \cdot c_{j_1+m , \ldots, j_n+m}
\end{multline*}
We will also relabel the indexes as $ k_i = j_i +m $ , so that
\[ X^n  (\theta_1, \ldots, \theta_n) = \frac{2}{(2\pi)^n} \sum_{m=1}^n \sum_{ \substack{ (k_1, \ldots , k_n) \in \mathbb Z^n  \\ k_1 + \ldots + k_n =mN  \\ -N+m < k_1, \ldots , k_n < N+m} } \cos(k_1 \theta_1 + \ldots + k_n\theta_n) \cdot c_{k_1, \ldots, k_n} \]

We conclude the discussion about $\SU(N)$ $n$-level densities by restating that we have
\[ R^n_{SU(N)}(\theta_1, \ldots, \theta_n) =  R^n_{U(N)}(\theta_1, \ldots, \theta_n)  + X^n(\theta_1, \ldots, \theta_n) , \quad n=1, \ldots,  N-1  \]

For later notional simplicity, we also extend the definition of $X_{SU(N)}^n$ to the cases $n=0$ and $n=N$ by
\begin{equation} \label{eq:xn}
X^n(\theta_1, \ldots, \theta_n)  =
\begin{cases}
		0 , & \text{ if } n=0 \\
		R_{SU(N)}^n(\theta_1, \ldots, \theta_n) - R_{U(N)}^n(\theta_1, \ldots, \theta_n) , & \text{ if }n=1,\ldots, N-1 \\
		N! \mathcal P_{SU(N)}(\theta_1, \ldots, \theta_N) - R^N_{U(N)}(\theta_1, \ldots, \theta_N), & \text{ if }  n=N \\
\end{cases} 
\end{equation}
\\
We can now continue the analogy with $\U(N)$  by studying the SU(N) generating function, first for an arbitrary interval, and then for a $\SU(N)$ Gram interval.

\begin{definition}  If $J\subset [-\pi, \pi)$ is an arbitrary interval on the unit circle of any length and $ E_{SU(N)}(k, J)$ is defined as in $\eqref{eq:esun}$, the $\SU(N)$ generating function is given by
\[ \mathcal E_{SU(N)}(z, J) := \sum_{k=0}^N (1+z)^k E_{SU(N)}(k, J) \]
\end{definition}

As in the $\U(N)$ case, we can obtain a formula for $E_{SU(N)}(k, J)$  in terms of the the generating function 
\begin{equation}  \label{eq:esun2}
E_{SU(N)}(k, J) = \frac{1}{k!} \left. \left( \frac{d^k}{d z^k}  \mathcal E_{SU(N)}(z, J) \right) \right|_{z=-1} 
\end{equation}

We will now use the fact that each $\SU(N)$ $n$-level density depends on the corresponding $\U(N)$ $n$-level density to prove that the $\SU(N)$ generating function can also be expressed in terms of the $\U(N)$ generating function. \\

\begin{theorem}
 For an arbitrary interval $J\subset [-\pi, \pi)$ on the unit circle of any length, we have that
\begin{equation} \label{eq:mainth}
\mathcal E_{SU(N)}(z, J) = \mathcal E_{U(N)}(z, J) + \sum_{n=1}^N \frac{z^n}{n!} \int_{J^n} X_{SU(N)}^n(\theta_1, \ldots, \theta_n) \; d\theta_1\ldots d\theta_n
\end{equation}
where $X^n_{SU(N)}$ is defined in \eqref{eq:xn}.
\end{theorem}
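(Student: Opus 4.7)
The plan is to derive a sum-formula for $\mathcal E_{SU(N)}(z, J)$ analogous to \eqref{eq:ungenfunc}, and then peel off $\mathcal E_{U(N)}(z, J)$ using the pointwise decomposition $R^n_{SU(N)} = R^n_{U(N)} + X^n$ that is built into the definition \eqref{eq:xn}.

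\medskip

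\noindent\textbf{Step 1 (generating-function identity).} Starting from the definition of $\mathcal E_{SU(N)}(z,J)$, I would insert the definition \eqref{eq:esun} of $E_{SU(N)}(k,J)$ and rewrite the restriction on the domain of integration using indicator functions. The key combinatorial observation is the identity
\[
\prod_{i=1}^N \bigl(1+z\,\chi_J(\theta_i)\bigr) = \sum_{n=0}^N z^n \sum_{\substack{S\subseteq\{1,\ldots,N\}\\ |S|=n}} \prod_{i\in S} \chi_J(\theta_i).
\]
Because $\mathcal P_{SU(N)}$ is symmetric in $\theta_1,\ldots,\theta_N$, each of the $\binom{N}{n}$ subsets contributes the same integral, so after integrating against $\mathcal P_{SU(N)}$ we get
\[
\mathcal E_{SU(N)}(z,J) = \sum_{n=0}^N z^n \binom{N}{n} \int_{[-\pi,\pi)^N}\chi_J(\theta_1)\cdots\chi_J(\theta_n)\,\mathcal P_{SU(N)}(\theta_1,\ldots,\theta_N)\,d\theta_1\ldots d\theta_N.
\]

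\medskip

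\noindent\textbf{Step 2 (reduction to $n$-level densities).} For $n=0,1,\ldots,N-1$, integrating out $\theta_{n+1},\ldots,\theta_N$ and invoking the definition \eqref{eq:sunlevel} converts the $n$-th term into $\frac{z^n}{n!}\int_{J^n} R^n_{SU(N)}(\theta_1,\ldots,\theta_n)\,d\theta_1\ldots d\theta_n$. The $n=0$ term is simply $1$. For $n=N$ there is nothing left to integrate out, so the term reads
\[
z^N\int_{J^N}\mathcal P_{SU(N)}(\theta_1,\ldots,\theta_N)\,d\theta_1\ldots d\theta_N \;=\; \frac{z^N}{N!}\int_{J^N} N!\,\mathcal P_{SU(N)}(\theta_1,\ldots,\theta_N)\,d\theta_1\ldots d\theta_N.
\]

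\medskip

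\noindent\textbf{Step 3 (splitting off the $\U(N)$ generating function).} Now I apply the decomposition \eqref{eq:xn}: for $n=1,\ldots,N-1$ it gives $R^n_{SU(N)} = R^n_{U(N)} + X^n$, and for $n=N$ it gives the crucial identity $N!\,\mathcal P_{SU(N)} = R^N_{U(N)} + X^N$, which is exactly what the author's extended definition of $X^N$ was designed to supply. Substituting into the sum from Step~2 yields
\[
\mathcal E_{SU(N)}(z,J) = 1 + \sum_{n=1}^N \frac{z^n}{n!}\int_{J^n} R^n_{U(N)}\,d\theta_1\ldots d\theta_n + \sum_{n=1}^N \frac{z^n}{n!}\int_{J^n} X^n\,d\theta_1\ldots d\theta_n,
\]
and the first two summands are precisely $\mathcal E_{U(N)}(z,J)$ by the known formula \eqref{eq:ungenfunc}, giving \eqref{eq:mainth}.

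\medskip

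\noindent\textbf{Expected obstacle.} The only subtlety is the endpoint $n=N$: there is no natural $N$-level density for $\SU(N)$ in the integration sense (nothing is left to integrate out once the determinant-one constraint is imposed), so the derivation would stall if one tried to simply recycle \eqref{eq:sunlevel}. The artificial extension of $X^N$ in \eqref{eq:xn} is tailor-made to patch exactly this term, and verifying that it produces the correct identity $N!\,\mathcal P_{SU(N)}=R^N_{U(N)}+X^N$ inside $\int_{J^N}$ is the only nontrivial check. Everything else is symmetric-function bookkeeping.
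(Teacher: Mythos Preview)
Your proposal is correct and follows essentially the same route as the paper: both arguments establish $\mathcal E_{SU(N)}(z,J)=\int\prod_i(1+z\chi_J(\theta_i))\,\mathcal P_{SU(N)}\,d\theta$, expand the product via symmetry into a sum over $n$ of $\frac{z^n}{n!}\int_{J^n}R^n_{SU(N)}$ (with the $n=N$ term handled separately), and then invoke \eqref{eq:xn} together with \eqref{eq:ungenfunc} to peel off $\mathcal E_{U(N)}(z,J)$. The only cosmetic difference is that the paper spells out the first identity via the partition $SU(N)=\bigcup_k P_k$, whereas you obtain it by rewriting the integrand pointwise; the remaining steps and the treatment of the $n=N$ endpoint coincide.
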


\begin{proof}
First, we denote the SU($N$) Haar measure by
 \[ d\mu_{SU(N)} = \mathcal P_{SU(N)} (\theta_1, \ldots, \theta_N) \; d\theta_1 \ldots d\theta_N  \]
 and show that
\[   \int_{SU(N)} (1 + z\chi_J(\theta_1)) \ldots (1 + z\chi_J(\theta_N)) \; d\mu_{SU(N)} = \sum_{k=0}^N (1+z)^k E_{SU(N)}(k, J) \]
 (where, again,  $\chi_J(\theta)$ is the characteristic function)

For $k=0,\ldots, N$, let $P_k\subset $ $\SU(N)$ be the subset of all $\SU(N)$ matrices with exactly $k$ eigenangles in $J$. These sets are all pairwise disjoint and their union is SU$(N)$, so they form a partition of $\SU(N)$. This implies that
\begin{align*}
& \int_{SU(N)} (1 + z\chi_J(\theta_1)) \ldots (1 + z\chi_J(\theta_N)) \; d\mu_{SU(N)}  \\
& =  \int_{P_0 \cup \ldots \cup P_N} (1 + z\chi_J(\theta_1)) \ldots (1 + z\chi_J(\theta_N)) \; d\mu_{SU(N)}  \\
& = \sum_{k=0}^N \int_{P_k} (1 + z\chi_J(\theta_1)) \ldots (1 + z\chi_J(\theta_N)) d\mu_{SU(N)}  \\
& = \int_{P_0} d\mu_{SU(N)} + (1+z) \int_{P_1} d\mu_{SU(N)} + \ldots + (1+z)^N\int_{P_N} d\mu_{SU(N)}
\end{align*}
and, by definition, $E_{SU(N)}(k, J)$ represents the measure of the set of $\SU(N)$ matrices which have precisely $k$ eigenangles in $J$, so
\[ E_{SU(N)}(k, J) =  \int_{P_k} d\mu_{SU(N)}  \]
That is, we have shown that
\begin{align*}
 \mathcal E_{SU(N)}(z, J) & = \int_{SU(N)} (1 + z\chi_J(\theta_1)) \ldots (1 + z\chi_J(\theta_N)) \; d\mu_{SU(N)}  \\
				&  = \int_{[-\pi. \pi)^N} (1 + z\chi_J(\theta_1)) \ldots (1 + z\chi_J(\theta_N)) \;  \mathcal P_{SU(N)}(\theta_1, \ldots, \theta_N) \; d\theta_1 \ldots d\theta_N 
\end{align*}
After we open all the brackets, this becomes
\[ \mathcal E_{SU(N)}(z, J) = 1 + \sum_{n=1}^N z^n \int_{[-\pi, \pi)^N}  h_n(\chi_J(\theta_1), \ldots, \chi_J(\theta_N)) \;  \mathcal P_{SU(N)}(\theta_1, \ldots, \theta_N) \; d\theta_1 \ldots d\theta_N  \]
where the $h_n$'s are elementary symmetric polynomials in $\chi_J(\theta_1), \ldots, \chi_J(\theta_N)$
\[ \begin{cases}
	h_1 = \chi_J(\theta_1) + \ldots + \chi_J(\theta_N) \medskip    \\
	h_2 = \displaystyle \sum_{1\leq i<j\leq N} \chi_J(\theta_i)\chi_J(\theta_j) \\
	\dots\dots\dots\dots\dots\dots\dots\dots \\
	h_N = \chi_J(\theta_1) \ldots \chi_J(\theta_N)
\end{cases} \]
Because each $h_n$ has $\displaystyle \binom{N}{n}$ terms, and $\mathcal P_{SU(N)}(\theta_1, \ldots, \theta_N) $
is invariant under the permutation of any of its arguments, the $\SU(N)$ generating function becomes
\begin{align*}
& \mathcal E_{SU(N)}(z, J)  \\
& = 1 + \sum_{n=1}^N z^n \frac{N!}{n!(N-n)!} \int_{[-\pi, \pi)^N}  \chi_J(\theta_1)\ldots \chi_J(\theta_n) \; \mathcal P_{SU(N)}(\theta_1, \ldots, \theta_N)  \; d\theta_1 \ldots d\theta_N = \\
& \\
& = 1 + \sum_{n=1}^N \frac{z^n}{n!} \; \frac{N!}{(N-n)!} \int_{J^n} \int_{[-\pi,\pi)^{N-n}} \mathcal P_{SU(N)} (\theta_1, \ldots, \theta_N) \; d\theta_1 \ldots d\theta_N = \\
& \\
& = 1 + \sum_{n=1}^N \frac{z^n}{n!} \int_{J^n} \left[ \frac{N!}{(N-n)!} \int_{[-\pi,\pi)^{N-n}} \mathcal P_{SU(N)} (\theta_1, \ldots, \theta_N) d\theta_{n+1} \ldots d\theta_N \right]  d\theta_1\ldots d\theta_n = \\
& \\
& = 1 + \sum_{n=1}^{N-1} \frac{z^n}{n!} \int_{J^n} R^n_{SU(N)}(\theta_1, \ldots, \theta_n) \; d\theta_1\ldots d\theta_n + \frac{z^N}{N!} \int_{J^N} N! \mathcal P_{SU(N)} (\theta_1, \ldots, \theta_N) \; d\theta_1\ldots d\theta_N
\end{align*}
where in the last line we use \eqref{eq:sunlevel} to obtain $R^n_{\SU(N)}$ for $n=1,\dots,N-1$. Now if we apply \eqref{eq:xn}, we find that $\mathcal E_{SU(N)}(z, J)$ equals
\begin{align*}
= & 1 + \sum_{n=1}^{N-1} \frac{z^n}{n!} \int_{J^n} [ R^n_{U(N)}(\theta_1, \ldots, \theta_n) + X^n(\theta_1, \ldots, \theta_n) ] \; d\theta_1\ldots d\theta_n + \\
& +  \frac{z^N}{N!} \int_{J^N} [ R^N_{U(N)} (\theta_1, \ldots, \theta_N) + X^N(\theta_1, \ldots, \theta_N) ] \; d\theta_1\ldots d\theta_N  
\end{align*}
By re-grouping the terms, this equals
\[ = 1 + \sum_{n=1}^N \frac{z^n}{n!} \int_{J^n} R^n_{U(N)}(\theta_1, \ldots, \theta_n) \; d\theta_1\ldots d\theta_n + \sum_{n=1}^{N} \frac{z^n}{n!} \int_{J^n} X^n(\theta_1, \ldots, \theta_n) \; d\theta_1\ldots d\theta_n \]
and noting the first term is the $\U(N)$ generating function \eqref{eq:ungenfunc}, we have shown that
\[ \mathcal E_{SU(N)}(z, J) = \mathcal E_{U(N)}(z, J)  + \sum_{n=1}^N \frac{z^n}{n!} \int_{J^n} X^n(\theta_1, \ldots, \theta_n) \; d\theta_1\ldots d\theta_n \]
which was our desired result.  \\
\end{proof}

The above formula can be used, together with \eqref{eq:eun2} and \eqref{eq:esun2}, to derive a similar relation between $ E_{SU(N)}(k, J)$ and $ E_{U(N)}(k,J) $
\begin{corollary}
For any $k=0,\ldots, N$ and any $J\subset [-\pi, \pi)$, we have
\[ E_{SU(N)}(k, J) = E_{U(N)}(k,J) + \sum_{n=k}^N  \frac{(-1)^{n-k}}{k!(n-k)!} \int_{J^n} X^n(\theta_1, \ldots, \theta_n) \; d\theta_1\ldots d\theta_n  \]
\end{corollary}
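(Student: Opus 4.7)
The plan is to apply the differentiation identity \eqref{eq:esun2} to the main identity \eqref{eq:mainth} from the preceding theorem, treating the theorem's identity as an identity between polynomials in $z$ and then reading off the coefficient information via repeated differentiation at $z=-1$.

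First, I would start from the identity
\[ \mathcal E_{SU(N)}(z, J) = \mathcal E_{U(N)}(z, J) + \sum_{n=1}^N \frac{z^n}{n!} \int_{J^n} X^n(\theta_1, \ldots, \theta_n) \; d\theta_1 \ldots d\theta_n \]
and apply the operator $\frac{1}{k!} \frac{d^k}{dz^k}$ to both sides. On the left-hand side, \eqref{eq:esun2} gives $E_{SU(N)}(k,J)$ after setting $z=-1$; on the first term of the right-hand side, \eqref{eq:eun2} gives $E_{U(N)}(k,J)$. It remains to compute the derivative of each term $\frac{z^n}{n!}\int_{J^n} X^n$ appearing in the sum.

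Next, I would use the elementary fact that
\[ \frac{d^k}{dz^k} z^n = \begin{cases} \dfrac{n!}{(n-k)!} z^{n-k}, & n \geq k \\ 0, & n < k \end{cases} \]
so only the terms with $n \geq k$ survive, and at $z=-1$ each such term contributes $(-1)^{n-k} n!/(n-k)!$. Dividing by $k!$ and by the $n!$ already present in the denominator, the $n$-th surviving term becomes exactly
\[ \frac{(-1)^{n-k}}{k!(n-k)!} \int_{J^n} X^n(\theta_1,\ldots,\theta_n)\; d\theta_1 \ldots d\theta_n, \]
which gives the desired formula.

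There is no real obstacle here: the argument is purely formal manipulation of the polynomial identity \eqref{eq:mainth} in $z$, and every analytic ingredient (the validity of \eqref{eq:eun2}, \eqref{eq:esun2}, and \eqref{eq:mainth}, together with the definition of $X^n$) has already been established. The only bookkeeping point worth noting explicitly in writing the proof is that the lower limit of the summation shifts from $n=1$ in the theorem to $n=k$ in the corollary precisely because of the vanishing of $\frac{d^k}{dz^k} z^n$ for $n<k$, so that in particular for $k=0$ one recovers $\mathcal E_{SU(N)}(-1,J) = E_{SU(N)}(0,J)$ as the consistency check.
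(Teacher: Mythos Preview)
Your proposal is correct and follows essentially the same approach as the paper: start from \eqref{eq:esun2}, substitute the theorem's identity \eqref{eq:mainth}, recognise the first term as $E_{U(N)}(k,J)$ via \eqref{eq:eun2}, and differentiate the remaining polynomial in $z$ term by term at $z=-1$. Your write-up is in fact slightly more explicit than the paper's, spelling out the formula for $\tfrac{d^k}{dz^k} z^n$ and why the summation index shifts from $n=1$ to $n=k$.
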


\begin{proof}
\begin{align*}
E_{SU(N)}(k, J) &= \frac{1}{k!} \left. \left( \frac{d^k}{d z^k}  \mathcal E_{SU(N)}(z, J) \right) \right|_{z=-1} \\
		     &=  \frac{1}{k!} \left. \left( \frac{d^k}{d z^k}  \mathcal E_{U(N)}(z, J) \right) \right|_{z=-1}  + \frac{1}{k!} \left. \left( \frac{d^k}{d z^k}  \sum_{n=1}^N \frac{z^n}{n!} \int_{J^n} X^n(\theta_1, \ldots, \theta_n) \; d\theta_1\ldots d\theta_n \right) \right|_{z=-1} \\
		     & = E_{U(N)}(k,J) + \sum_{n=k}^N  \frac{(-1)^{n-k}}{k!(n-k)!} \int_{J^n} X^n(\theta_1, \ldots, \theta_n) \; d\theta_1\ldots d\theta_n
\end{align*}
\end{proof}

\subsection{Application to $\SU(N)$ Gram intervals and conclusions}

As previously mentioned, up to this point $J\subset [-\pi, \pi)$ was allowed to be  an arbitrary interval on the unit circle of any length. We conclude this section with a brief discussion on how $  \mathcal E_{SU(N)}(z, J) $ and $E_{SU(N)}(k, J)$ are affected in the particular case when $J$ is taken to be a $\SU(N)$ Gram interval, for example
\[  \mathcal J = \left[-\pi,-\pi + \frac{2\pi}{N} \right) \]

First, it can be easily seen that the integral of the $X^1_{SU(N)}(\theta_1)$ term over $\mathcal J$ will always be zero
\[  X^1(\theta_1) =  \frac{2(-1)^{N-1}\cos (N\theta_1)}{2\pi}     \quad \Rightarrow \quad \int_{\mathcal J} X^1(\theta_1) \; d\theta_1 =  0 \]
So in this case, the first non-zero term of the sum in \eqref{eq:mainth} is at $n=2$
\[ \mathcal E_{SU(N)}(z,\mathcal J) = \mathcal E_{U(N)}(z, \mathcal J) + \sum_{n=2}^N \frac{z^n}{n!} \int_{\mathcal J^n} X^n(\theta_1, \ldots, \theta_n) \; d\theta_1\ldots d\theta_n   \]

We recall from the formula for the $\SU(N)$ two-level density that the $ X^2(\theta_1, \theta_2)$ term is explicitly given by
\begin{align*}
   X^2(\theta_1, \theta_2) &=  \frac{2}{(2\pi)^2}  [  2 \cos(N\theta_1+N\theta_2)   - \cos((N+1)\theta_1+(N-1)\theta_2) \\
						& \quad - \cos((N-1)\theta_1+(N+1)\theta_2)   + (-1)^{N-1}(N-1)\cos(N\theta_1) +\\
						&\quad + (-1)^{N-1}(N-1)\cos(N\theta_2) + 2(-1)^{N-2} \mathop{ \sum_{k_1 = 1}^{N-1} \sum_{k_2=1}^{N-1}}_{k_1 + k_2=N} \cos(k_1 \theta_1 + k_2 \theta_2)  ]
\end{align*}
This can be used to show that its integral over a pair of $\SU(N)$ Gram intervals is given by
\[ \int_{\mathcal J^2} X^2 (\theta_1, \theta_2) \, d\theta_1 d\theta_2  =  \frac{4}{\pi^2} \left[ \frac{1}{N^2-1} \left(\sin\frac{\pi}{N} \right)^2  -  \mathop{ \sum_{k_1 = 1}^{N-1} \sum_{k_2=1}^{N-1}}_{k_1 + k_2=N}  \frac{1}{k_1 \, k_2} \sin\left( \frac{k_1 \pi}{N} \right) \sin\left( \frac{k_2\pi}{N} \right) \right] \]

We remark that the double sum can be re-expressed as a single sum
\begin{align*}
\mathop{ \sum_{k_1 = 1}^{N-1} \sum_{k_2=1}^{N-1}}_{k_1 + k_2=N}  \frac{1}{k_1 \, k_2} \sin\left( \frac{k_1 \pi}{N} \right) \sin\left( \frac{k_2\pi}{N} \right) &=  \sum_{k=1}^{N-1} \frac{1}{k(N-k)} \sin \left( \frac{k\pi}{N}\right)  \sin\left(\pi-\frac{k\pi}{N}\right) \\
&= \frac{2}{N}\sum_{k=1}^{N}\frac{1}{k} \left(\sin\frac{k\pi}{N} \right)^2 
\end{align*}
and using Euler-Maclaurin summation, this sum can be approximated by an integral, 
\begin{align*}
 \sum_{k=1}^{N}\frac{1}{k} \left(\sin\frac{k\pi}{N} \right)^2 &= \int_1^N \frac{1}{x} \left(\sin\frac{x\pi}{N} \right)^2 dx + \mathcal O\left( \frac{1}{N^2} \right) \\
& = \int_{\frac{1}{N}}^1 \frac{(\sin y\pi )^2}{y} dy + \mathcal O \left( \frac{1}{N^2} \right)  
\end{align*}
The integral, in turn, can be expressed in terms of the  cosine integral function Ci$(x)$
\[  \int_{\frac{1}{N}}^1 \frac{(\sin y\pi )^2}{y} dy = \frac{1}{2} \left[ \text{Ci}\left( \frac{2\pi}{N} \right) - \text{Ci}(2\pi) - \log\left( \frac{1}{N} \right) \right] \]
where Ci$(x)$ is defined as
\[ \text{Ci}(x) := -\int_{x}^\infty \frac{\cos t}{t}dt = \gamma + \log x + \sum_{n = 1}^\infty \frac{(-1)^n x^{2n}}{2n(2n)!} \]
This implies that the above sum is equal to
\[ \sum_{k=1}^{N}\frac{1}{k} \left(\sin\frac{k\pi}{N} \right)^2 = \frac{1}{2} [ \gamma + \log(2\pi) - \text{Ci}(2\pi)  ] + \mathcal O\left( \frac{1}{N^2} \right)\]
while the double sum becomes
\[ \mathop{ \sum_{k_1 = 1}^{N-1} \sum_{k_2=1}^{N-1}}_{k_1 + k_2=N}  \frac{1}{k_1 \, k_2} \sin\left( \frac{k_1 \pi}{N} \right) \sin\left( \frac{k_2\pi}{N} \right) =  \frac{1}{N} [ \gamma + \log(2\pi) - \text{Ci}(2\pi)  ] + \mathcal O\left( \frac{1}{N^3} \right)\]
In the case of the first term in the integral of $ X_{SU(N)}^2 (\theta_1, \theta_2) $, as $N$ increases, it has the order
\[  \frac{1}{N^2-1} \left(\sin\frac{\pi}{N} \right)^2  = \mathcal O\left( \frac{1}{N^4} \right) \]

Putting the previous results back together, we obtain that the contribution coming from the integral of $X_{SU(N)}^2 (\theta_1, \theta_2) $ is
\[ \int_{\mathcal J^2} X^2 (\theta_1, \theta_2) \, d\theta_1 d\theta_2  = -\frac{\alpha}{N}  + \mathcal O\left( \frac{1}{N^3} \right) \]
where
\[ \alpha := \frac{4 [ \gamma + \log(2\pi) - \text{Ci}(2\pi)  ]}{\pi^2} \approx 0.987944\dots \]

Numerical results suggest that the value of the integral of $ X^n(\theta_1, \ldots, \theta_n) $ over $\mathcal J^n$ decreases by several orders of magnitude as $n=3,\ldots, N$ increases, which implies that the integral of the $X^2(\theta_1, \theta_2)$ term gives the main error to $E_{SU(N)}(k,\mathcal J)$. However, according to Corollary 1, $X^2(\theta_1, \theta_2)$ appears in $E_{SU(N)}(k,\mathcal J)$ only for $k=0,1,2$. Keeping also in mind that in this case, the term $n=1$ of that sum is zero, we get the following approximations for these probabilities
\begin{align*}
 E_{SU(N)}(0,\mathcal J) &= E_{U(N)}(0, \mathcal J ) + \sum_{n=2}^N  \frac{(-1)^n}{0!n!} \int_{\mathcal J^n} X^n(\theta_1, \ldots, \theta_n) \; d\theta_1\ldots d\theta_n \\
				& = E_{U(N)}(0, \mathcal J ) + \frac{1}{2} \int_{\mathcal J^2} X^2 \, d\theta_1 d\theta_2 +\ldots  \\
&\approx   E_{U(N)}(0, \mathcal J) - \frac{\alpha}{2N}
\end{align*}
\begin{align*}
E_{SU(N)}(1, \mathcal J) &=  E_{U(N)}(1, \mathcal J ) + \sum_{n=2}^N  \frac{(-1)^{n-1}}{1!(n-1)!} \int_{\mathcal J^n} X^n(\theta_1, \ldots, \theta_n) \; d\theta_1\ldots d\theta_n \\
& = E_{U(N)}(1, \mathcal J ) - \int_{\mathcal J^2} X^2 (\theta_1, \theta_2)  \, d\theta_1 d\theta_2 +\ldots \\
& \approx E_{U(N)}(1, \mathcal J) + \frac{\alpha}{N} 
\end{align*}
\begin{align*}
E_{SU(N)}(2,\mathcal J) &=  E_{U(N)}(2, \mathcal J ) + \sum_{n=2}^N  \frac{(-1)^{n-2}}{2!(n-2)!} \int_{\mathcal J^n} X^n(\theta_1, \ldots, \theta_n) \; d\theta_1\ldots d\theta_n \\
& = E_{U(N)}(2, \mathcal J ) + \frac{1}{2} \int_{\mathcal J^2} X^2 (\theta_1, \theta_2)  \, d\theta_1 d\theta_2 + \ldots \\
&\approx E_{U(N)}(2,\mathcal J) - \frac{\alpha}{2N} 
\end{align*}

So, in conclusion, we note that in the large $N$ limit, the $\SU(N)$-probability of finding 0, 1, or 2 eigenvalues  in a $\SU(N)$ Gram interval converges to the $\U(N)$-probability of finding 0, 1, or 2 eigenvalues in an arbitrary interval of length $\frac{2\pi}{N}$, and we've estimated the rate of convergence. This supports a statement of Odlyzko \cite{odlyzko2}, who wrote that \\

\begin{quote}
``it seems reasonable to expect that at large heights the local distribution of the zeros will be independent of Gram points, which leads to the above assumption. In other words, the expectation is that at large heights, any grid of points spaced like the Gram points would exhibit similar behavior with respect to location of zeros.''
\end{quote}


\section*{Acknowledgements}

We would like to thank David Platt and the LMFDB Collaboration for providing us access to their data on the first 100 billion non-trivial zeros of the zeta function \cite{lmfdb}, without which the computation of the results from Table \ref{tab:GramStatsLowHeight} would not have been possible.

\quad \\

\end{document}